\newtheorem{lem}{Lemma}
\newtheorem{prop}{Proposition}
\newtheorem{thm}{Theorem}
\newtheorem{corr}{Corollary}
\newtheorem{ex}{Example}
\newtheorem{rem}{Remark}
\begin{document}

\title[On exponential functionals]{On exponential functionals of processes with independent increments}

\maketitle

\begin{center}
{\large P. Salminen, Department of Natural Sciences
Åbo Akademi University FIN-20500 Åbo Finland }\\
{\large and}\\
\hspace{-0.5cm}{\large  L. Vostrikova \footnote{$^1${This work is supported in part by  DEFIMATHS project of the Reseach Federation of ''Math\'{e}matiques de Pays de la Loire'' and PANORisk project of Pays de la Loire region.}} , LAREMA, D\'epartement de
Math\'ematiques, Universit\'e d'Angers, 2, Bd Lavoisier  49045,
\sc Angers Cedex 01}

\end{center} 
\vspace{0.2in}

\begin{abstract}
In this paper we study the exponential functionals of  the processes $X$ with independent increments  , namely
$$I_t= \int _0^t\exp(-X_s)ds, _,\,\, t\geq 0,$$ 
and also
$$I_{\infty}= \int _0^{\infty}\exp(-X_s)ds.$$
When $X$ is a semi-martingale with absolutely continuous characteristics, we derive recurrent  integral equations for  Mellin transform ${\bf E}( I_t^{\alpha})$, $\alpha\in\mathbb{R}$, of the integral functional $I_t$. Then we apply these  recurrent formulas to calculate the moments. We present also the corresponding results for the exponential functionals of  Levy processes, which hold under less restrictive conditions then in \cite{BY}. In particular, we obtain an explicit formula  for the moments of $I_t$ and $I_{\infty}$, and we precise  the exact number of finite moments  of $I_{\infty}$.

\end{abstract}
\noindent MSC 2010 subject classifications: 60G51, 91G80

\begin{section}{Introduction}\label{s1}

\par The exponential functionals  arise in many areas : in the theory of self-similar Markov processes, in the theory of random processes in random environments, in the mathematical statistics, in the mathematical finance, in the insurance. In fact,  self-similar Markov processes are related with exponential functionals via Lamperti transform, namely self-similar Markov process can be written as an exponential of Levy process time changed by the inverse of exponential functional of the same Levy process (see \cite{L}). In the mathematical statistics the exponential functionals appear, for exemple, in the study of Pitman estimators (see \cite{NK}). In the mathematical finance the question is related to the perpetuities containing the liabilities, the perpetuities subjected to the influence of economical factors (see, for example, \cite{KR}), and also  with the prices of Asian options and related questions (see, for instance, \cite{CJY} and references therein). In the  insurance, this connection is made via the ruin problem, the problem in which the exponential functionals  appear very naturally (see, for exemple \cite{P}, \cite{A}, \cite{K} and references therein).
\par In  the case of Levy processes, the asymptotic behaviour of exponential functionals was studied in \cite{CPY}, in particular for $\alpha$-stable Levy processes. The authors also give  an integro-differential equation for the density of the law of exponential functionals, when this density w.r.t the Lebesgue measure exists. In \cite{BY}, for Levy subordinators, the authors give the formulas for the positive and negative moments, the Mellin transform and the Laplace transform. The questions related with the characterisation of the law of exponential functionals by the moments was also studied. General information about Levy processes  can be find in \cite{Sa}, \cite{B}, \cite{Ky}.
\par In more general setting, related to the Lévy case, the following functional
\begin{equation}\label{0} 
 \int _0^{\infty}\exp(-X_{s-})d\eta _s 
\end{equation}
 was studied by many authors, where $X=(X_t)_{t\geq 0}$ and $\eta= (\eta_t)_{t\geq 0}$ are independent Lévy processes.
 The interest to this functional can be explained by a very close relation between the distribution of this functional and the stationary density of the corresponding generalized Ornshtein-Uhlenbeck process. We recall that the generalized Ornshtein-Uhlenbeck process $Y=(Y_t)_{t\geq 0}$ verify the following differential equation
\begin{equation}\label{gen} 
 dY_t= Y_{t-}dX_t+d\eta_t
\end{equation} 
 When the jumps of the process $\eta$ are strictly bigger than -1, and $X_0=0$, the solution of this equation is
 \begin{equation}\label{gen1}
 Y_t= \mathcal{E}(X)_t \left(Y_0 + \int_0^t \frac{d\eta_s}{\mathcal{E}(X)_{s-}}\right)
 \end{equation}
 where $\mathcal{E}(X)$ is Doléan-Dade exponential (see \cite{JSh} for the details). Finally, if  we introduce another Lévy process $\hat{X}$ such that for all $t>~0$
 $$\mathcal{E}(X)_t = \exp(\hat{X}_t),$$  then  the integral of the type \eqref{0} appear in \eqref{gen1}. 
\par The conditions for finiteness of the integral \eqref{0} was obtained in \cite{EM}. The continuity properties of the law of this integral was studied in \cite{BLM}, where the authors give the condition for absence of the atoms and also the conditions for absolute continuity of the laws of integral functionals w.r.t. the Lebesgue measure. Under the assumptions which ensure the existence of the density of these functionals, the equations for the density  are given  in \cite{Be}, \cite{BeL}, \cite{KPS}. It should be noticed that taking the process $\eta$ being only drift, one can get $I_{\infty}$ as a special case of the Lévy framework.
\par In the papers \cite{PS} and \cite{PRV}, again for Levy process, the  properties of the exponential functionals $I_{\tau_q}$  was studied where $\tau_q$ is independent exponential random variable  of the parameter $q>0$. In the article \cite{PRV} the authors studied the existence of the density of the law of $I_{\tau_q}$, they give an integral equation for the density and the asymptotics of the law of $I_{\infty}$ at zero and at infinity, when $X$ is a positive subordinator.
The results given in \cite{PS}  involve analytic Wiener-Hopf factorisation, Bernstein functions and contain the conditions for regularity, semi-explicite expression and asymptotics for the distribution function of the exponential functional killed at the independent exponential time $\tau_q$. Despite numerous studies, the distribution properties of  $I_t$ and $I_{\infty}$ are known only in a limited number of cases. When $X$ is Brownian motion with drift, the distributions of  $I_t$ and $I_{\infty}$ was studied in \cite{D} and  for a big number of  specific processes $X$ and $\eta$, like Brownian motion with drift and compound Poisson process, the distributions of  $I_{\infty}$ was given in \cite{GP}. 
\par Exponential functionals for diffusions was studied in \cite{SW}. The authors considered exponential functionals stopped at first hitting time and  they derive  the Laplace transform of these functionals. To find the laws of such exponential functionals, the authors perform a numerical inversion of the corresponding Laplace transform. The relations between the hitting times and the occupation times for the exponential functionals was considered in \cite{SY}, where the versions of identities in law such as Dufresne's identity, Ciesielski-Taylor's identity, Biane's identity, LeGall's identity was considered.  
\par Howerever, the exponential functionals involving  non-homogeneous processes with independent increments (PII in short) have not been studied sufficiently up to now. Only a few results can be found in the literature. Some results  about the moments of  the exponential functional of this type are given in \cite{E}.  At the same time PII models for logarithme of the prices are quite natural in the mathematical finance, it is the case of the non-homogeneous Poisson process, the Levy process with deterministic time change, the integrals of Lévy processes with deterministic integrands, the hitting times for diffusions and so on (see for instance \cite{Sh}, \cite{CW}, \cite{BSh}).
\par The aim of this paper is  to study the exponential functionals of  the processes $X$ with independent increments, namely
\begin{equation}\label{deft}
I_t= \int _0^t\exp(-X_s)ds, \,\,\, t\geq 0,
\end{equation}
and also
$$I_{\infty}= \int _0^{\infty}\exp(-X_s)ds,$$
and give  such important characteristics of these exponential functionals as the moments and the  Laplace transforms and  the Mellin transforms.
\par For that we consider a real valued process $X=(X_t)_{t\geq 0}$ with independent increments  and $X_0=0$, which is a semi-martingale with respect to its natural filtration. We denote by $(B,C,\nu)$ a semi-martingale triplet of this process, which can be chosen deterministic (see \cite{JSh}, Ch. II, p.106). We suppose that
$B=(B_t)_{t\geq 0}$, $C=(C_t)_{t\geq 0}$ and $\nu$ are absolutely continuous with respect to the Lebesgue measure in $t$, i.e. that $X$ is an Ito process such that
\begin{equation}\label{abscont}
B_t= \int_0^t b_s\,ds, \,\,\,C_t= \int_0^t c_s\,ds, \,\,\, \nu(dt,dx)= dt K_t(dx)
\end{equation}
with the measurable functions $b=(b_s)_{s\geq 0}, c=(c_s)_{s\geq 0}$, and the kernel $K=(K_t(A))_{t\geq 0, A\in\mathcal{B}(\mathbb{R}\setminus\{0\})}$. For more information about the semi-martingales and the Ito processes see  \cite{JSh}.

\par We assume that the compensator of the measure of the jumps $\nu$ verify the usual relation: for each $t\in \mathbb{R}^+$
\begin{equation}\label{cond0}
\int_0^t\int_{\mathbb{R}\setminus\{0\}} (x^2\wedge 1)K_s(dx)\,ds < \infty.
\end{equation}
\par We recall that the characteristic function  of $X_t$  $$\phi_t(\lambda)={\bf E}\exp(i\lambda X_t)$$  is defined by the following expression: for $\lambda\in \mathbb{R}$
$$\phi_t(\lambda )= \exp\{i\lambda B_t - \frac{1}{2}\lambda ^2C_t + \int_0^t\int_{\mathbb{R}\setminus\{0\}}(e^{i\lambda x}-1-i\lambda x {\bf 1}_{\{|x|\leq 1 \}})\,K_s(dx)\,ds\}$$
which can be easily obtained by the Ito formula for semimartingales.
We recall also  that $X$ is a semi-martingale if and only if for all $\lambda\in \mathbb{R}$ the characteristic function  of $X_t$ is of finite variation in $t$ on finite intervals (cf. \cite{JSh}, Ch.2, Th. 4.14, p.106 ). Moreover, the process $X$ always can be written as a sum of a semi-martingale and a deterministic function which is not necessarily of finite variation on finite intervals.
\par From the formula for the characteristic function we can easily find the Laplace transform of $X_t$, if it exists:
$$ {\bf E} (e^{-\alpha X_t}) = e^{-\Phi (t, \alpha )}$$
Putting $\lambda = i\alpha$ in the previous formula, we get that
$$\Phi (t,\alpha) = \alpha B_t -\frac{1}{2}\alpha ^2C_t - \int_0^t\int_{\mathbb{R}\setminus\{0\}}(e^{-\alpha x}-1+\alpha x {\bf 1}_{\{|x|\leq 1 \}})\,K_s(dx)\,ds.$$
As known, in the case when $X$ is a Levy process with  parameters $(b_0,c_0, K_0)$, 
it holds
$${\bf E}(e^{-\alpha X_t})= e^{-t\Phi (\alpha)}$$
with
$$ \Phi (\alpha ) = \alpha b_0 -\frac{1}{2}\alpha ^2c_0 - \int_{\mathbb{R}\setminus\{0\}}(e^{-\alpha x}-1+\alpha x {\bf 1}_{\{|x|\leq 1 \}})\,K_0(dx).$$
\par In this article we derive a simple sufficient condition for finiteness of 
 ${\bf E} (I_t^{\alpha})$ with fixed $t>0$ (see Proposition 1). The main results of the paper, presented in Theorems 1 and 2, give recurrent integral equations for Mellin transform ${\bf E} (I_t^{\alpha})$. The starting point for the proof of these results is time reversal 
of the process $X$ at fixed time $t>0$, i.e. we  introduce a new process $Y^{(t)}=(Y^{(t)}_s)_{0\leq s\leq t}$ with $Y^{(t)}_s= X_t - X_{(t-s)-}$. In Lemma 1 we show that
$$ I_t= e^{-Y^{(t)}_t}\int_0^t e^{Y^{(t)}_s}ds.$$
Then, we prove that $Y^{(t)}$ is a process with independent increments and we identify its semi-martingale characteristics.  This time reversal plays an important role and  permits to replace  the process $(I_t)_{t\geq 0}$ which is not Markov, by a family of Markov processes $V^{(t)}=(V^{(t)}_s)_{0\leq s\leq t}$ indexed by $t>0$, where $V^{(t)}_s= e^{-Y^{(t)}_s}\int_0^s e^{Y^{(t)}_u}du$. For these family of Markov processes the general theory of stochastic processes can be applied to establish a recurrent integral equation of their Mellin transforms. Then, via Lemma 1, we can  reverse the time once more and get the integral recurrent equation for Mellin transform of  $I_t$. 

\par In Theorem \ref{t1} we consider the case of $\alpha \geq 0$ and we give a recurrent integral equation for the Mellin transform  of $I_t$.  In the Corollaries \ref{c1} and \ref{c2} we consider the case when $X$ is a Levy process. We give the formulas for positive moments  of $I_t$ and $I_{\infty}$ and also the formula for  the Laplace transform. The results for $I_{\infty}$   coincide, of course, with the ones given in  \cite{BY} for Levy subordinators. But it holds under less restrictive integrability conditions on $I_{\infty}$ and less restrictive  condition on Levy  measure at zero. We can also precise  the number of finite moments of $I_{\infty}$ (cf. Corollary \ref{c2} and Corollary \ref{c4}).
In Theorem \ref{t2} and  Corollaries \ref{c3} and \ref{c4} we present analogous result for the case $\alpha <0$.
\end{section}

\begin{section}{Finiteness of ${\bf E}(I_t^{\alpha})$ for fixed $t>0$.}

\par  In the following proposition we give  simple  sufficient conditions for the existence of the Mellin transform of $I_t$.

\begin{prop}\label{p2} Let $\alpha\in \mathbb{R}$ and $t>0$.  The condition
\begin{equation}\label{4}
\int_0^t\int_{|x|>1}e^{-\alpha x}\,K_s(dx) ds < \infty , 
\end{equation}
which is equivalent to  
\begin{equation}\label{5}
{\bf E}(e^{-\alpha X_t})<\infty ,
\end{equation}
 implies  that for  $\alpha \geq 1$ and $\alpha \leq 0$  
\begin{equation}\label{30aaa}
{\bf E}(I_t^{\alpha})<\infty .
\end{equation}
In addition, if \eqref{4} is valid for $\alpha = 1$, then for $0\leq \alpha\leq 1$ we have
\eqref{30aaa}.
\end{prop}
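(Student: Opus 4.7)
\emph{Plan.} I would split the argument into three ingredients: the equivalence $\eqref{4}\Leftrightarrow\eqref{5}$, a convexity argument covering $\alpha\ge 1$ and $\alpha\le 0$ simultaneously, and a separate elementary bound for the intermediate range $0\le\alpha\le 1$.

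For the equivalence, I would observe that, since $X$ has independent increments, $X_t$ is infinitely divisible with L\'evy--Khintchine triplet $\bigl(B_t,C_t,\int_0^t K_s(\cdot)\,ds\bigr)$ read off directly from the formula for $\phi_t(\lambda)$. Substituting $\lambda=i\alpha$ would give
$${\bf E}(e^{-\alpha X_t})=\exp\Big\{-\alpha B_t+\tfrac12\alpha^2 C_t+\int_0^t\!\!\int(e^{-\alpha x}-1+\alpha x{\bf 1}_{\{|x|\le 1\}})K_s(dx)\,ds\Big\};$$
under \eqref{4} this integral is absolutely convergent (the $|x|\le 1$ part being controlled by $|e^{-\alpha x}-1+\alpha x|\le C(\alpha)x^2$ together with \eqref{cond0}), which yields $\eqref{4}\Rightarrow\eqref{5}$. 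The converse I would simply invoke as the standard criterion for exponential moments of an infinitely divisible law.

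For $\alpha\ge 1$ or $\alpha\le 0$, the function $u\mapsto u^\alpha$ is convex on $(0,\infty)$, so Jensen's inequality applied to the probability measure $ds/t$ on $[0,t]$ would give
$$\Big(\tfrac{I_t}{t}\Big)^\alpha\le \tfrac 1 t\int_0^t e^{-\alpha X_s}\,ds,$$
and then Fubini yields ${\bf E}(I_t^\alpha)\le t^{\alpha-1}\int_0^t{\bf E}(e^{-\alpha X_s})\,ds$. I would then argue that $s\mapsto {\bf E}(e^{-\alpha X_s})=e^{-\Phi(s,\alpha)}$ is finite for every $s\in[0,t]$, since \eqref{4} at time $t$ dominates the analogous condition at time $s\le t$, and is continuous in $s$ thanks to the absolute-continuity assumption \eqref{abscont}, hence bounded on $[0,t]$. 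For the remaining range $0\le\alpha\le 1$ I would instead use the elementary inequality $u^\alpha\le 1+u$ valid for all $u\ge 0$, giving ${\bf E}(I_t^\alpha)\le 1+{\bf E}(I_t)\le 1+\int_0^t{\bf E}(e^{-X_s})\,ds$, which is finite precisely when \eqref{4} holds at $\alpha=1$.

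The delicate point, I expect, will be the integrability of $s\mapsto {\bf E}(e^{-\alpha X_s})$ on $[0,t]$: this forces one to check that each of the three pieces of $\Phi(s,\alpha)$ is not merely finite but continuous in $s$, which is exactly where the absolute-continuity hypothesis \eqref{abscont} is genuinely used. The Jensen step and the bound $u^\alpha\le 1+u$ are both elementary once the correct regime of $\alpha$ has been identified.
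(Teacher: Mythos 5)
Your proposal is correct and follows essentially the same route as the paper: the same Jensen inequality with respect to the normalized Lebesgue measure $ds/t$ followed by Fubini for $\alpha\ge 1$ and $\alpha\le 0$, the same reliance on \eqref{abscont} for the continuity (hence boundedness) of $s\mapsto{\bf E}(e^{-\alpha X_s})$ on $[0,t]$, and the same reduction of the equivalence $\eqref{4}\Leftrightarrow\eqref{5}$ to the standard exponential-moment criterion for infinitely divisible laws (the paper identifies the law of $X_t$ with that of a L\'evy process at time $t$ and cites Kruglov's theorem). The only cosmetic differences are that the paper handles $0<\alpha<1$ via $\bigl({\bf E}(I_t)\bigr)^{\alpha}$ rather than your bound $u^{\alpha}\le 1+u$, both of which reduce to the finiteness of ${\bf E}(I_t)$.
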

\vskip0.5cm
\begin{rem}\label{r1}\rm
Assume that X has only positive jumps, then for $\alpha >~0$ the condition \eqref{4} is always satisfied. In the same way, if $X$ has only negative jumps, then, of course, for  $\alpha <0$ the condition \eqref{4} is satisfied. If $X$ has bounded jumps,  the condition \eqref{4} is also satisfied.  In general case,   the condition \eqref{4}
is equivalent to the one of the following conditions: if $\alpha>0$, then
\begin{equation}\label{8} 
\int_0^t\int_{x<-1}e^{-\alpha x}K_s(dx) ds < \infty , 
\end{equation}
and if $\alpha<0$, then
\begin{equation}\label{99}
\int_0^t\int_{x>1}e^{-\alpha x}K_s(dx) ds < \infty . 
\end{equation}
In the case of Levy processes these conditions coincide with the ones given in \cite{Ky}, p. 79.
\end{rem}
\vskip0.5cm
\begin{rem}\label{r3}\rm It should be noticed that if the condition \eqref{4} is verified for
$\alpha '= \alpha +\delta$ with $\delta >0$ and  $\alpha >0$, then it is verified also for $\alpha $.
To see this,  apply H\"{o}lder inequality to the integrals in \eqref{8} and \eqref{99} with the parameters $p=\frac{\alpha +\delta}{\alpha}$ and $q=\frac{\alpha +\delta}{\delta}$. In fact, since $\frac{1}{p}+\frac{1}{q}=1$ and $\nu([0,t]\times ]-\infty,-1[)<\infty$ we have
for \eqref{8}:
$$\int_0^t\int_{x<-1}e^{-\alpha x}K_s(dx) ds \leq $$
$$\left(
\int_0^t\int_{x<-1}e^{-\alpha p x}K_s(dx) ds\right) ^{\frac{1}{p}}\,\left(\int_0^t\int_{x<-1}K_s(dx) ds\right) ^{\frac{1}{q}}
<\infty  $$
The same can be made for $\alpha<0$. In this case we take $\delta<0$.
\end{rem}

\begin{proof} We remark that for $t>0$, the law of $X_t$ coincide with the one-dimensional  law of Levy process $L$  with the triplet 
$$\left(\frac{1}{t}B_t,\, \frac{1}{t}C_t,\, \frac{1}{t}\int_0^t\int_{\mathbb{R}\setminus \{0\}}K_s(dx)ds\right)$$ at the time $t$. Then, the equivalence of \eqref{4} and \eqref{5} is a simple consequence of Kruglov theorem (cf. Theorem 25.3 in \cite{Sa}).
\par Let $\alpha\geq 1$ or $\alpha<0$ and $\tau$ be uniformly distributed on $[0,t]$ random variable independent of $X$ and . Then,
$${\bf E}(e^{-X_{\tau}})= {\bf E}(I_t/t)$$
 and applying the Jensen inequality we get
\begin{equation}\label{jensen}
(I_t/t)^{\alpha}\leq \frac{1}{t}\int_0^t e^{- \alpha X_s} ds
\end{equation}
Now, due to \eqref {5} and \eqref{abscont}, $f(s)={\bf E}(e^{-\alpha X_s})$, $s\in[0,t]$, is well-defined continuous function, and, hence, by Fubini theorem
$${\bf E}(I_t^{\alpha})\leq  t^{\alpha -1}\,\int _0^t\,{\bf E}(e^{-\alpha X_s}) ds<\infty .$$
\par If $\alpha =0$ the result is obvious, and if $0 < \alpha < 1$, then by the Hölder inequality we get that $${\bf E}(I_t^{\alpha})\leq ({\bf E}I_t)^{\alpha}<~\infty $$
and this ends the proof.
\end{proof}

Let us give some examples useful in mathematical finance.
\begin{ex}\rm
 Let $X$ be non-homogeneous Poisson process with intensity $(\lambda _s)_{s\geq 0}$. Then, $K_s(A) = \lambda _s\cdot \delta _{\{1\}}(A)$ for all $A\in \mathcal{B}(\mathbb{R}\setminus\{0\})$, where $\delta_{\{1\}}$ is delta-function at 1, and the condition \eqref{4} is satisfied as soon as $\int_0^t\lambda_s ds < \infty $. Hence, under the last condition, ${\bf E}(I_t^{\alpha})<\infty$ for all $\alpha\in\mathbb{R}$.
 \end{ex}
\begin{ex}\rm 
Let $L$ be Levy process with generating triplet $(b_0,c_0,K_0)$ and $X$ be the process $L$ time changed by deterministic continuously differentiable process $(\tau(t))_{t\geq 0}$, i.e. $X_t=  L_{\tau(t)}$.
Then, for $t\geq0$ 
$${\bf E}e^{-\alpha L_{\tau(t)}}= e^{-\Phi (\alpha)\tau(t)}$$
where $\Phi(\alpha)$ is Laplace exponent of $L$.
We write
$$\Phi(\alpha )\tau(t) = \alpha\,\tau(t)\,b_0 -\frac{1}{2}\alpha ^2\,\tau(t)\,c_0 - \tau(t)\,\int_{\mathbb{R}\setminus\{0\}}(e^{-\alpha x}-1+\alpha x\,{\bf 1}_{\{|x|\leq 1 \}} )\,K_0(dx)$$
and also, the Laplace exponent of $X$
$$\Phi(t, \alpha ) = \alpha B_t -\frac{1}{2}\alpha ^2C_t - \int_0^t\int_{\mathbb{R}\setminus\{0\}}(e^{-\alpha x}-1+\alpha x\,{\bf 1}_{\{|x|\leq 1 \}} )\,K_s(dx)\,ds$$
By the identification  we get that that 
 $B_t= b_0\tau(t)$, $C_t= c_0\tau(t)$, and for all $A\in\mathcal{B}(\mathbb{R}\setminus\{0\})$, $\nu(A, dt) = K_0(A) \tau '(t)dt$. Hence, $b_t=b_0\tau '(t)$, $c_t=c_0 \tau '(t)$, and all $A\in\mathcal{B}(\mathbb{R}\setminus\{0\})$, $K_t(A) = K_0(A) \tau '(t)$.
 The condition \eqref{4} is satisfied whenever
\begin{equation}\label{ex2}
\int_{|x|>1}e^{-\alpha x}K_0(dx) < \infty .
\end{equation}
Under this condition ${\bf E}(I_t^{\alpha})<\infty$.
\end{ex}
\begin{ex}\rm
Let $L$ be Levy process with generating triplet $(b_0,c_0,K_0)$ such that $K_0$ has a density $f_0$ w.r.t. a Lebesgue measure and 
$$X_t= \int_0^t g_sdL_s$$
where $g=(g_s)_{s\geq 0}$ is a not vanishing measurable square-integrable  function. To find the characteristics of $X$ we take a canonical decomposition of $L$, namely
$$L_t= b_0t+ \sqrt{c_0}W_t + \int _0^t\int_{\mathbb{R}\setminus \{0\}} x (\mu_X(ds, dx)-\nu(ds, dx))$$
where $W$ is standard Brownian motion.
We put this decomposition into the integral which defines $X$. We get by the identification that
$B_t= b_0\int_0^t g_sds$, $C_t= c_0\int_0^t g_s^2ds$. Moreover,
since $\Delta X_s= g_s \Delta L_s$ for $s>~0$,  we deduce by projection theorem that for any positive measurable function~$h$
$${\bf E}\int_0^t\int_{\mathbb{R}\setminus \{0\}}h(x) \mu_X(ds, dx)= {\bf E}\int_0^t\int_{\mathbb{R}\setminus \{0\}}h(x) K_s(dx) ds$$
and that the l.h.s. of the previous equality is equal to
$${\bf E}\int_0^t\int_{\mathbb{R}\setminus \{0\}}h(xg_s) \mu_L(ds, dx)= {\bf E}\int_0^t\int_{\mathbb{R}\setminus \{0\}}h(xg_s)K_0(dx)\,ds$$
Changing the variables we get  that $K_s(dx) = \frac{1}{|g_s|}\, f_0(\frac{x}{g_s})\,dx $ and that the condition
 \eqref{4} is satisfied whenever 
$$\int_0^t \int_{|x|>1}e^{-\alpha x g_s}\, K_0(dx)ds < \infty .$$  
Under the last condition,  ${\bf E}(I_t^{\alpha})<\infty$.
\end{ex}
\par In what follows we assume that $K$ verify the following stronger relation then \eqref{cond0}: for $t>0$
\begin{equation}\label{cond}
\int_0^t\int_{\mathbb{R}\setminus\{0\}} (x^2\wedge |x|) K_s(dx) ds < \infty .
\end{equation}
This supposition says, roughly speaking,  that the ''big'' jumps of the process $X$ are integrable, and it ensures that finite variation part of semi-martingale decomposition of $X$ remains deterministic. Moreover, the truncation of the jumps is no more necessary. In addition, Kruglov theorem can be applied to show that \eqref{cond} is equivalent to ${\bf E}(| X_t|)< \infty$ (cf. \cite{Sa}, Th. 25.3, p.159).
\end{section}
\begin{section}{Time reversal procedure}
\par We introduce, for fixed $t>0$,  a new process $Y^{(t)}=(Y^{(t)}_s)_{0\leq s\leq t}$ with $Y^{(t)}_s= X_t-X_{(t-s)-}$. To simplify the notations we anyway omit the index $(t)$ and write $Y_s$ instead of $Y_s^{(t)}$.

First of all we establish the relation between $I_t$ and the process $Y$.
\begin{lem}\label{ll1}
For $t>0$ the following equality holds:
$$ I_t= e^{-Y_t}\int _0^t e^{Y_s}ds $$ 
\end{lem}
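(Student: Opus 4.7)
The plan is to unwind the definition of $Y$ and perform a straightforward change of variable, with a brief note about how left limits interact with Lebesgue integration.

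First I would record that $Y_t = X_t - X_{0-} = X_t$, using $X_0 = 0$ and the fact that $X$ has no jump at $0$ (so the left limit at $0$ vanishes). This immediately gives $e^{-Y_t} = e^{-X_t}$, which will be the prefactor that cancels the $e^{X_t}$ appearing inside the integral.

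Next I would substitute $u = t - s$ in
\[
\int_0^t e^{Y_s}\,ds \;=\; \int_0^t e^{X_t - X_{(t-s)-}}\,ds,
\]
which (with $du = -ds$ and the appropriate swap of limits) turns the integral into $\int_0^t e^{X_t - X_{u-}}\,du$. Pulling out the constant $e^{X_t}$ and multiplying by $e^{-Y_t} = e^{-X_t}$ leaves $\int_0^t e^{-X_{u-}}\,du$.

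Finally, I would observe that $X$ has at most countably many jumps on $[0,t]$, so the set $\{u \in [0,t] : X_{u-} \neq X_u\}$ is Lebesgue-null. Hence $\int_0^t e^{-X_{u-}}\,du = \int_0^t e^{-X_u}\,du = I_t$, which closes the identity.

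There is no real obstacle here; the only point that deserves a sentence in the written proof is the replacement of $X_{u-}$ by $X_u$ inside the Lebesgue integral, since without it the reader might wonder whether the definition of $Y_s$ (using left limits, as required for c\`adl\`ag predictable versions) produces the same integrand as in the definition \eqref{deft} of $I_t$.
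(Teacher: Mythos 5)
Your proof is correct and follows essentially the same route as the paper: unwind the definition of $Y$, change variables $u = t-s$, and note that replacing $X_{u-}$ by $X_u$ does not change the Lebesgue integral since the jump times form a null set. The paper's proof is identical in substance, merely stating the last point more tersely ("the integration of both versions of the process w.r.t.\ the Lebesgue measure gives the same result").
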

\begin{proof} Using the definition of the process $Y$ and the assumption that $X_0=0$ we have  
$$e^{-Y_t}\int _0^t e^{Y_s}ds =  \int _0^t e^{-Y_t +Y_s}ds=
\int _0^t e^{-X_{(t-s)-}+X_0}ds $$
$$ \hspace{1.8cm}=\int_0^t e^{-X_{t-s}}ds= \int_0^t e^{-X_s}ds = I_t,$$
since the integration of the both versions of the process w.r.t. the Lebesgue measure gives  the same result. 
\end{proof}

We will show that the process $Y$ is PII and we will give its semi-martingale triplet with respect to its natural filtration.  For that we put 
$$\bar{b}_u=\left\{ \begin{array}{ll}
b_{t-u}&\mbox{for}\, 0\leq u< t,\\
b_t &\mbox{for}\,u=t,
\end{array}\right.$$
which can be written also as
$$\bar{b}_u={\bf 1}_{\{t\}}(u)(b_t-b_0)+ b_{t-u},$$
where ${\bf 1}_{\{t\}}$ is indicator function of the set $\{t\}$.
We do the similar definitions for $\bar{c}$ and ~$\bar{K}$:
$$\bar{c}_u= {\bf 1}_{\{t\}}(u)(c_t-c_0)+c_{t-u},$$
$$\bar{K}_u(A)= {\bf 1}_{\{t\}}(u)(K_t(A)-K_0(A))+ K_{t-u}(A)$$
 for all $A\in\mathcal{B}(\mathbb{R}\setminus\{0\})$. 
\begin{lem}\label{ll2}
The process $Y$ is a process with independent increments, it is a semi-martingale with respect to its natural filtration, and its semi-martingale triplet  $(\bar{B}, \bar{C}, \bar{\nu})$ is given  by :
\begin{equation}\label{cat}
\bar{B_s}= \int_0^s \bar{b}_u du, \,\,\bar{C_s}= \int_0^s \bar{c}_u du,\,\,
\bar{\nu}(ds,dx) = \bar{K}_s(dx) \,ds \,,
\end{equation}
where $0\leq s \leq t$.
\end{lem}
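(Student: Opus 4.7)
The plan is to reduce $Y$ to the simpler expression $Y_s=X_t-X_{t-s}$ (almost surely) and then transfer the PII structure and triplet of $X$ through the time-reversal map $u\mapsto t-u$. As a preliminary step I would observe that because $(B,C,\nu)$ is absolutely continuous in $t$, the characteristic function $\phi_t(\lambda)$ of $X_t$ recorded in the introduction is continuous in $t$ for every $\lambda\in\mathbb{R}$; hence $X$ is stochastically continuous and has no fixed times of discontinuity. Consequently $X_{(t-s)-}=X_{t-s}$ almost surely for each $s\in[0,t]$, and in particular $Y_0=X_t-X_{t-}=0$ a.s.

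The independent increments property of $Y$ then follows directly from that of $X$. For $0\le s_1<s_2\le t$,
$$Y_{s_2}-Y_{s_1}=X_{t-s_1}-X_{t-s_2}$$
is an increment of $X$ on $[t-s_2,t-s_1]$. On the other hand $\mathcal{F}^Y_{s_1}=\sigma(Y_u:u\le s_1)$ is generated by the family $\{X_t-X_{t-u}:0\le u\le s_1\}$, i.e.\ by increments of $X$ contained in $[t-s_1,t]$. Since the intervals $[t-s_2,t-s_1]$ and $[t-s_1,t]$ are disjoint, the PII property of $X$ forces $Y_{s_2}-Y_{s_1}$ to be independent of $\mathcal{F}^Y_{s_1}$.

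To identify the semi-martingale triplet, I would compute the one-dimensional characteristic function of $Y_s$ directly. Since $Y_s=X_t-X_{t-s}$ a.s., the formula for $\phi_t(\lambda)$ from the introduction gives
$${\bf E}\,e^{i\lambda Y_s}=\exp\Bigl\{i\lambda(B_t-B_{t-s})-\tfrac12\lambda^2(C_t-C_{t-s})+\int_{t-s}^{t}\!\!\int_{\mathbb{R}\setminus\{0\}}\!(e^{i\lambda x}-1-i\lambda x{\bf 1}_{\{|x|\le1\}})K_u(dx)\,du\Bigr\}.$$
The change of variable $u=t-v$ converts each $\int_{t-s}^{t}\!\cdot\,du$ into $\int_{0}^{s}\!\cdot\,dv$ with the integrand evaluated at $t-v$, reproducing exactly the triplet $(\bar B,\bar C,\bar\nu)$ displayed in \eqref{cat}; the pointwise corrections ${\bf 1}_{\{t\}}$ in the definitions of $\bar b,\bar c,\bar K$ affect only the single time $u=t$ and are harmless for Lebesgue integration. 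Since $(\bar B,\bar C,\bar\nu)$ is itself absolutely continuous in $s$, the characteristic function of $Y_s$ is of finite variation in $s$, and the Jacod--Shiryaev criterion (Ch.~II, Thm.~4.14) already invoked in the introduction identifies $Y$ as a semi-martingale with exactly this triplet.

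The main bookkeeping obstacle, in my view, is the verification that $\mathcal{F}^Y_{s_1}$ really is generated by increments of $X$ lying inside $[t-s_1,t]$, with no additional information from the past of $X$ leaking in through the definition $Y_u=X_t-X_{(t-u)-}$. This is precisely the point at which stochastic continuity (and the consequent $Y_0=0$ a.s.) is indispensable; once this is granted, the remainder of the argument is a routine transcription of the characteristics of $X$ through the reversal $u\mapsto t-u$.
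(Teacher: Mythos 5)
Your proof is correct and follows essentially the same route as the paper: both reduce the increments of $Y$ to increments of $X$ over the reversed intervals, identify the law of $Y_s$ through the factorization ${\bf E}(e^{i\lambda X_t})={\bf E}(e^{i\lambda Y_s})\,{\bf E}(e^{i\lambda X_{(t-s)-}})$, and obtain \eqref{cat} by the substitution $u\mapsto t-u$. The only real difference is cosmetic: you remove the left limits at the outset by invoking stochastic continuity (a valid consequence of the absolutely continuous characteristics), whereas the paper keeps $X_{(t-s)-}$ and instead factorizes the characteristic function of the increment vector at times $t-s_k-h$ and lets $h\to 0+$.
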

\it Proof \rm Let us take $0=s_0<s_1<s_2<\cdots <s_n= t$ with $n\geq 2$. Then the increments $(Y_{s_k}-Y_{s_{k-1}})_{1\leq k\leq n}$ of the process $Y$ are equal to
$(X_{(t-s_{k-1})-}-X_{(t-s_{k})-})_{1\leq k\leq n}$ and $0=t-s_n<t-s_{n-1}<\cdots <t-s_0=~t$. From the fact that $X$ is the process with independent increments, the characteristic function of the vector $(X_{t-s_{k-1}-h}-X_{t-s_{k}-h})_{1\leq i\leq n}$ with small $h>0$, can be written as a product of the corresponding characteristic functions. Namely, for any real constants $(\lambda_k)_{0\leq k\leq n}$ we get:
$${\bf E}\exp(i\sum_{k=1}^n \lambda_k (X_{t-s_{k-1}-h} - X_{t-s_k-h})) = \prod_{k=1}^n {\bf E}\exp(i\lambda _k (X_{t-s_{k-1}-h} - X_{t-s_k-h}))$$
Then, passing to the limit as $h\rightarrow 0+$,
$${\bf E}\exp(i\sum_{k=1}^n\lambda_k (X_{(t-s_{k-1})-} - X_{(t-s_k)-})) = \prod_{k=1}^n {\bf E}\exp(i\lambda _k(X_{(t-s_{k-1})-} - X_{(t-s_k)-}))$$
Hence, $Y$ is a process with independent increments.
\par We know that we can identify the semi-martingale characteristics w.r.t. the natural filtration of the process with independent increments    from the characteristic function of this process. We notice that $Y_s=X_t-X_{(t-s)-}$ and by the independence of $Y_s$ and $X_{(t-s)-}$
$${\bf E}(e^{i\lambda X_t}) = {\bf E}(e^{i\lambda Y_s}){\bf E}(e^{i\lambda X_{(t-s)-}})$$
Then,
$${\bf E}\exp (i\lambda Y_s)={\bf E}\exp (i\lambda X_t)/{\bf E}\exp (i\lambda X_{(t-s)-})=$$ $$ \exp\{i\lambda\int_{t-s}^tb_u du - \frac{1}{2}\lambda ^2\int_{t-s}^tc_u du + \int_{t-s}^t\int_{\mathbb{R}\setminus\{0\}}(e^{i\lambda x}-1-i\lambda x )\,K_u(dx)\,du\}$$
We substitute $u$ by $u'=t-u$ in the integrals to obtain
$$\hspace{-3cm}{\bf E}\exp (i\lambda Y_s)= $$ $$ \exp\{i\lambda\int_0^s\overline{b}_udu - \frac{1}{2}\lambda ^2\int_0^s\overline{c}_u du + \int_0^s\int_{\mathbb{R}\setminus\{0\}}(e^{i\lambda x}-1-i\lambda x )\,\overline{K}_u(dx)\,du\}$$
Therefore, the characteristics of $Y$ are as in  \eqref{cat}
and the proof is complete.$\Box$
\end{section}
\begin{section}{Recurrent formulas for the Mellin transform  of $I_t$  with $\alpha \geq 0$.}
Let us consider two important processes  related with the process $Y$, namely the process $V=(V_s)_{0\leq s\leq t}$ and $J=(J_s)_{0\leq s\leq t}$ defined via
$$ V_s= e^{-Y_s}J_s,\hspace{1cm} J_s= \int_0^s e^{Y_u}du.$$
We underline that the both  processes depend of the parameter $t$.
\par We remark that according to  Lemma \ref{ll1}
, $I_t=V_t$  for each $t\geq 0$.
For $\alpha \geq 0$ and $t\geq 0$  we introduce  the Mellin transform of $I_t$ of  the parameter ~$\alpha$:
$$m^{(\alpha )}_t= {\bf E}(I^{\alpha}_t)= {\bf E}(e^{-\alpha Y_t}J^{\alpha}_t)$$
and the Mellin transform for shifted process:
$$m_{s,t}^{(\alpha )}= {\bf E}\left[ \left(\int_{s}^{t}e^{-(X_u-X_{s-})}du\right)^{\alpha}\right]$$
Notice that $m_{0,t}^{(\alpha )}=m_{t}^{(\alpha )}$. Notice also that
$$m_{s,t}^{(\alpha )}= {\bf E}\left[ \left(\int_{s}^{t}e^{-(X_u-X_{s})}du\right)^{\alpha}\right]$$
In fact, $X_u-X_s= X_u-X_{s-}-\Delta X_s$ and
$$\int_{s}^{t}e^{-(X_u-X_{s-})}du=e^{-\Delta X_s}\int_{s}^{t}e^{-(X_u-X_{s})}du$$
Since $\Delta X_s$ and $(X_u-X_s)_{u\geq s}$ are independent, and ${\bf E}(e^{-\alpha \Delta X_s})=1$, we get the equality of two expressions for $m_{s,t}^{(\alpha )}$.
\par We introduce also two functions: for $0\leq s\leq t$
\begin{equation}\label{H}
H^{(\alpha)}_s= \alpha b_s  - \frac{1}{2}\alpha ^2 c_s - \int_{\mathbb{R}\setminus\{0\}} (e^{-\alpha x}-1+\alpha x) K_s(dx) 
\end{equation}
and
\begin{equation}\label{bH}
\bar{H}^{(\alpha )}_s= {\bf 1}_{\{t\}}(s)(H_t^{(\alpha)}- H_0^{(\alpha )})+ H^{(\alpha)}_{t-s}
\end{equation}
These functions represent the derivatives w.r.t. $s$, of the Laplace exponents $\Phi(s,\alpha)$ and $\bar{\Phi}(s, \alpha)$. We recall that
$$\Phi (s,\alpha) = \alpha B_s -\frac{{\alpha}^2}{2}C_s - \int_0^s\int_{\mathbb{R}\setminus\{0\}}(e^{-\alpha x}-1+\alpha x )\,\nu (du, dx)$$
and
$$\bar{\Phi} (s,\alpha) = \alpha \bar{B}_s -\frac{{\alpha}^2}{2}\bar{C}_s -\int_0^s \int_{\mathbb{R}\setminus\{0\}}(e^{-\alpha x}-1+\alpha x )\bar{\nu} (du, dx)$$
where $\nu$ and $\bar{\nu}$ are the compensators of the jump measure of $X$ and $Y$ respectively.
We notice, that these functions are well-defined  under condition \eqref{4}. We also notice that 
$$\bar{\Phi} (s,\alpha) =  \Phi (t,\alpha) -\Phi (t-s,\alpha) $$
\par Our aim now is to obtain a recurrent integral equation for the Mellin transform of $I_t$. For condition \eqref{rt1} below see Remarks \ref{r1} and \ref{r3}.
\begin{thm}\label{t1} Let $\alpha\geq 1$ be fixed and assume that $t>0$. Suppose that \eqref{abscont} and \eqref{cond} hold and  there exists $\delta >0$ such that
\begin{equation}\label{rt1}
\int_0^t\int_{x<-1} e^{-(\alpha +\delta ) x} K_s(dx)\,ds < \infty .
\end{equation}
Then,  $m^{(\alpha)}_t$ is well-defined and 
the following recurrent integral equation holds
\begin{equation}\label{8b}
m^{(\alpha)}_t = \alpha \int _0^t m^{(\alpha -1)}_{u,t}
 \,\,e^{-\Phi (u,\alpha)}\, du
\end{equation}
 If $X$ is Levy process, then  for all $t>0$ 
\begin{equation}\label{8aab}
m^{(\alpha)}_t = \alpha e^{-\Phi (\alpha)\,t}\int _0^t m^{(\alpha -1)}_s\,e^{\Phi ( \alpha)\,s}ds
\end{equation}
Moreover, 
\begin{equation}\label{8aa}
\frac{d}{dt}\left[m^{(\alpha)}_t\right] = - m^{(\alpha)}_t \Phi(\alpha )  + \alpha m^{(\alpha -1)}_t
\end{equation} 
\end{thm}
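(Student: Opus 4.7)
My plan is to proceed directly, bypassing It\^o's formula and the time-reversal machinery altogether. The starting point is the elementary pathwise identity
\begin{equation*}
I_t^\alpha = \alpha \int_0^t e^{-X_u}\, R_u^{\alpha-1}\, du, \qquad R_u := \int_u^t e^{-X_v}\, dv,
\end{equation*}
obtained by integrating $\frac{d}{du} R_u^\alpha = -\alpha R_u^{\alpha-1} e^{-X_u}$ between the endpoints $R_0 = I_t$ and $R_t = 0$. Well-definedness of $m^{(\alpha)}_t$ is secured first: Remark \ref{r3} converts the buffer condition \eqref{rt1} into \eqref{4} for the exponent $\alpha$, and Proposition \ref{p2} then gives $m^{(\alpha)}_t < \infty$.

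The crucial step is separating the past and the future of $X$. Factoring $e^{-X_u}$ out of the integral defining $R_u$ gives
\begin{equation*}
e^{-X_u}\, R_u^{\alpha-1} = e^{-\alpha X_u}\left(\int_u^t e^{-(X_v - X_u)}\, dv\right)^{\alpha-1},
\end{equation*}
and since $X$ has independent increments, $X_u$ is independent of $(X_v - X_u)_{v \geq u}$, so
\begin{equation*}
{\bf E}\bigl[e^{-X_u}\, R_u^{\alpha-1}\bigr] = {\bf E}\bigl(e^{-\alpha X_u}\bigr)\,{\bf E}\left[\left(\int_u^t e^{-(X_v - X_u)}\, dv\right)^{\alpha-1}\right] = e^{-\Phi(u,\alpha)}\, m^{(\alpha-1)}_{u,t}.
\end{equation*}
For $\alpha \geq 1$ all integrands are nonnegative, so Tonelli's theorem permits the exchange of expectation and the outer $du$-integral, producing the recurrence \eqref{8b}.

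For a L\'evy process, stationarity of increments gives $\int_u^t e^{-(X_v - X_u)}\, dv \stackrel{d}{=} I_{t-u}$ and $\Phi(u, \alpha) = u\, \Phi(\alpha)$, so $m^{(\alpha-1)}_{u,t} = m^{(\alpha-1)}_{t-u}$; substituting $s = t-u$ in \eqref{8b} yields \eqref{8aab}, and differentiating its product form in $t$ produces the ODE \eqref{8aa}.

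I expect no deep obstacle here: the only real points of attention are the finiteness of $m^{(\alpha)}_t$ and $m^{(\alpha-1)}_{u,t}$ and the legitimacy of Tonelli, both of which are handled by the buffer $\delta > 0$ in \eqref{rt1} via Remark \ref{r3} and Proposition \ref{p2}. The It\^o/time-reversal setup of Lemmas \ref{ll1}--\ref{ll2} is not strictly needed for Theorem \ref{t1}, but I expect it becomes indispensable for the companion result in the regime $\alpha < 0$, where the integration-by-parts above breaks down because $R_t^\alpha$ diverges as $R_t \to 0$.
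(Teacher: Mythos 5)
Your argument is correct, but it takes a genuinely different route from the paper. The paper never writes the pathwise identity $I_t^\alpha = \alpha\int_0^t e^{-X_u}R_u^{\alpha-1}\,du$; instead it reverses time (Lemmas \ref{ll1}--\ref{ll2}), represents $I_t$ as the terminal value of the Markov family $V_s=e^{-Y_s}\int_0^s e^{Y_u}du$, applies It\^o's formula to $V^\alpha_{s\wedge\tau_n}$, and then spends most of the proof on localization, a submartingale comparison and uniform integrability in order to pass to the limit and solve the resulting linear integral equation \eqref{rt2}. Your derivation replaces all of that stochastic calculus by a deterministic integration by parts of $u\mapsto R_u^\alpha$ (legitimate pathwise since $R$ is Lipschitz on $[0,t]$, decreasing, and $x\mapsto x^\alpha$ is $C^1$ on $[0,\infty)$ for $\alpha\geq 1$), followed by independence of increments and Tonelli; since the integrand is nonnegative, the identity \eqref{8b} holds in $[0,\infty]$ and finiteness is then supplied by Proposition \ref{p2} via Remark \ref{r3}. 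This is essentially the Bertoin--Yor route, which the authors themselves acknowledge in the remark following Theorem \ref{t1} as an alternative technique. What your approach buys is brevity and weaker effective hypotheses (the $\delta$-buffer in \eqref{rt1} is only used to guarantee \eqref{4} for $\alpha$, whereas the paper needs the strict buffer for its uniform-integrability estimate $\sup_s{\bf E}(\bar V_s^{\alpha+\delta})<\infty$); what the paper's machinery buys is reusability: as you correctly anticipate, your integration by parts fails for $\alpha<0$ because $R_t^{\alpha}=\infty$, and Theorem \ref{t2} is proved by rerunning the same It\^o/time-reversal scheme on $[s,t]$. Two small points you should make explicit: the expectation of the product factors because $X_u$ is independent of the whole post-$u$ increment process $(X_v-X_u)_{v\geq u}$ (not just of a single increment), and the differentiation yielding \eqref{8aa} requires continuity of $s\mapsto m_s^{(\alpha-1)}$, which follows from monotonicity of $s\mapsto I_s$ and dominated convergence; both are routine and the paper treats the latter equally lightly.
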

\begin{proof} From Lemma \ref{ll2} we know that $Y=(Y_s)_{0\leq s\leq t}$ is a process with independent increments which is a semi-martingale, i.e.
$Y_s= \bar{B}_s +\bar{M}_s$, where $\bar{B}$ is a deterministic process of finite variation on finite intervals and $\bar{M}$ is a local martingale. But the local martingales with independent increments are always the martingales (see \cite{ShCh}).

For $n\geq 1$ we introduce the stopping times 
$$\tau_n= \inf \{0\leq  s\leq t: V_s\geq n\,\mbox{or}\, \exp(-Y_s)\geq n\}$$
with $\inf\{\emptyset\}=+\infty$. For fixed $s$,  $0< s<t$, we write the Ito formula for  $V^{\alpha}_{s\wedge \tau_n}$ :
\begin{equation}\label{v1}
V^{\alpha}_{s\wedge \tau_n} = \alpha \int_0^{s\wedge \tau_n} V_{u-}^{\alpha -1} dV_u + \frac{1}{2}\alpha (\alpha -1)\int_0^{s\wedge \tau_n} V_{u-}^{\alpha -2}d<V^c>_u$$
$$ + \int_0^{s\wedge \tau_n}\int_{\mathbb{R}\setminus\{0\}}\left( (V_{u-}+x)^{\alpha} - V^{\alpha}_{u-} - \alpha V^{\alpha -1}_{u-} x\right) \mu_V(du,dx)
\end{equation}
where $\mu_V$ is the measure of the jumps of $V$.
Using integration by part formula, we have:
\begin{equation}\label{v2}
dV_u= du + J_u d( e^{-Y_u})
\end{equation}
Now, again by the Ito formula, we get
\begin{equation}\label{exp}
e^{-Y_u}=e^{-Y_0}-\int_0^ue^{-Y_{v-}}dY_v + \frac{1}{2}\int_0^ue^{-Y_{v-}} d<Y^c>_v
\end{equation}
$$\hspace{4cm} + \int_0^u\int_{\mathbb{R}\setminus\{0\}}e^{-Y_{v-}}(e^{-x}-1+x)\mu _Y(dv,dx)$$ 
Then, putting \eqref{exp} into  \eqref{v2}, we obtain
\begin{equation}\label{v3} dV^c_u = - e^{-Y_{u-}}J_{u}\, dY^c_u=-V_{u-}\,dY^c_u,
\end{equation}
$$d<V^c>_u \,= \,V_{u-}^2 \,d<Y^c>_u$$
and 
\begin{equation}\label{v4}
 \Delta V_u= e^ {-Y_{u-}}J_{u}(e^{-\Delta Y_u}-1)= V_{u-}(e^{-\Delta Y_u}-1),
\end{equation}
where $\Delta V_u= V_u-V_{u-}$ and $\Delta Y_u= Y_u-Y_{u-}$.
The previous relations imply that
\begin{equation}\label{9}
V^{\alpha}_{s\wedge \tau_n} = \alpha \int_0^{s\wedge \tau_n} V_{u-}^{\alpha -1} du 
\end{equation} 
$$+ \alpha \int_0^{s\wedge \tau_n}J_{u}\, V_{u-}^{\alpha -1} d(e^{-Y_u}) + \frac{1}{2}\alpha (\alpha -1)\int_0^{s\wedge \tau_n} V_{u-}^{\alpha }d<Y^c>_u$$
$$\hspace{3cm} + \int_0^{s\wedge \tau_n}\int_{\mathbb{R}\setminus\{0\}}V_{u-}^{\alpha}\left( e^{-\alpha x} -1-\alpha(e^{-x}-1)\right)\mu_Y(du,dx)$$
Now, to use in efficient way the Ito formula for $e^{-Y_u}$ given before, we introduce
the processes $A=(A_u)_{0\leq u\leq t}$ and $N=(N_u)_{0\leq u\leq t}$ via
$$A_u= \int_0^u e^{-Y_{v-}}[-d\bar{B}_v+\frac{1}{2}d\bar{C}_v]+\int_0^u \int_{\mathbb{R}\setminus\{0\}} e^{-Y_{v-}}(e^{-x}-1+x)\bar{\nu}(dv, dx)$$
$$N_u=- \int_0^u e^{-Y_{v-}}d\bar{M}_v+\int_0^u \int_{\mathbb{R}\setminus\{0\}}e^{-Y_{v-}}(e^{-x}-1+x)[\mu _Y(dv,dx)-\bar{\nu}(dv, dx)]$$ 
We notice that  $A$ is a process of locally bounded variation and $N$ is a local martingale with localizing sequence $(\tau _n)_{n\geq 1}$, since $\bar{B}$, $\bar{C}$
are of bounded variation on bounded intervals and $$\int_0^u\int_{\mathbb{R}\setminus\{0\}}(e^{-x} -1+x)\bar{K}_s(dx)  ds < \infty .$$
\par From  \eqref{exp}  we get that
\begin{equation}\label{v5}
e^{-Y_u} =  e^{-Y_0} +A_u + N_u.
\end{equation}
We incorporate this semi-martingale decomposition into \eqref{9} and we consider its martingale part.
This martingale part is represented by the term
$$\alpha\int_0^{s\wedge\tau_n}V_{u-}^{\alpha -1}\, J_u dN_u=$$
$$ \alpha\int_0^{s\wedge\tau_n}V_{u-}^{\alpha }\,[- d\bar{M}_u + \int_{\mathbb{R}\setminus\{0\}}  (e^{-x}-1+x)(\mu _Y(dv,dx)-\bar{\nu}(dv, dx))]$$
which is a local martingale. Let $(\tau'_n)_{n\geq 0}$ be a localizing sequence for this local martingale and let $\bar{\tau}_n= \tau_n\wedge\tau'_n$. Then we do additional stopping with $\tau'_n$ in previous expressions,
and we take mathematical expectation. Using the fact that the expectations of martingales starting from zero are equal to zero and also applying the projection theorem, we obtain:
\begin{equation}\label{rel}
{\bf E}(V^{\alpha}_{s\wedge \bar{\tau}_n}) = \alpha {\bf E}\left(\int_0^{s\wedge \bar{\tau}_n} V_{u-}^{\alpha -1} du\right)
\end{equation}
$$ + \alpha {\bf E}\left(\int_0^{s\wedge \bar{\tau}_n} V_{u-}^{\alpha -1} J_u \,dA_u\right)+ 
\frac{1}{2}\alpha (\alpha -1){\bf E}\left(\int_0^{s\wedge\bar{\tau}_n} V_{u-}^{\alpha }\,d\bar{C}_u\right)$$
$$ +{\bf E}\left( \int_0^{s\wedge \bar{\tau}_n}\int_{\mathbb{R}\setminus\{0\}}V_{u-}^{\alpha}\left[ e^{-\alpha x} -1-\alpha(e^{-x}-1)\right]\bar{\nu}(du, dx)\right)$$
and, hence,
\begin{equation}\label{rel1}
{\bf E}(V^{\alpha}_{s\wedge \bar{\tau}_n}) = \alpha {\bf E}\left(\int_0^{s\wedge \bar{\tau}_n} V_{u}^{\alpha -1} du\right) - {\bf E} \left(\int_0^{s\wedge \tau_n}V_{u-}^{\alpha}\,\,d\bar{\Phi}(u, \alpha) \right)
\end{equation}
\par We remark that $\bar{\tau}_n\rightarrow +\infty\, ({\bf P}-a.s.)$ as $n\rightarrow +\infty$. To pass to the limit as $n\rightarrow \infty$ in r.h.s. of the above equality, we use  the Lebesgue monotone convergence theorem for the first term and the Lebesgue dominated convergence theorem for the second term. In fact,   for the second term we have using \eqref{H} and \eqref{bH} :
$$ \left|\int_0^{s\wedge \bar{\tau}_n}V_{u-}^{\alpha}\,\,d\bar{\Phi}(u, \alpha)\right|\leq \int_0^{t}V^{\alpha}_{u-}\,|\bar{H}_u^{(\alpha )}|du$$
In addition,
$${ \bf E}\left(\int_0^{t}V^{\alpha}_{u-}\,|\bar{H}_u^{(\alpha )}|du\right)\leq 
\sup_{0\leq u \leq t}{\bf E}(V^{\alpha}_u)\,\int_0^{t}|\bar{H}_u^{(\alpha )}|du$$
The function $(\bar{H}_u^{(\alpha )})_ {0\leq u\leq t}$ is deterministic function, integrable on finite intervals. Hence, it remains to show that
\begin{equation}\label{sup0}
\sup_{0\leq s \leq t}{\bf E}(V^{\alpha}_s) < \infty .
\end{equation} 
By the Jensen inequality similar to \eqref{jensen} we obtain :
$$V^{\alpha}_s\leq s^{\alpha -1}\int_0^s e^{\alpha (Y_u - Y_s)}du$$
 and due to the fact that $Y_u-Y_s= X_{(t-s)-}- X_{(t-u)-}$, we get
\begin{equation}\label{sup1}
{\bf E}(V^{\alpha}_s) \leq s^{\alpha -1}\int_0^s {\bf E}( e^{\alpha ( X_{t-s}- X_{t-u})})\,du
\end{equation}
Since the process $X$ is a process with independent increments, we have for $0\leq ~u~\leq ~s\leq t$
$$ {\bf E}( e^{\alpha ( X_{t-s}- X_{t-u})})=  {\bf E}( e^{-\alpha  X_{t-u}})/ {\bf E} (e^{-\alpha  X_{t-s}})= $$
$$\exp\{ -\int_0^{t-u} H^{(\alpha)}_rdr +\int_0^{t-s} H^{(\alpha)}_rdr\}\leq \exp\{ \int_{0}^{t}| H^{(\alpha)}_r|dr \}$$
Due to the Remark \ref{r1} and the condition \eqref{rt1}, $H^{(\alpha )}$ is integrable function on finite intervals,  and hence,
$$\sup_{0\leq s \leq t}{\bf E}(V^{\alpha}_s)\leq t^{\alpha}\exp\{ \int_{0}^{t}| H^{(\alpha)}_r|dr \} <\infty$$
\par To pass to the limit in the l.h.s. of \eqref{rel1}, we show that  the family of $(V^{\alpha}_{s\wedge\tau_n})_{n\geq 1}$ is uniformly integrable, uniformly in $0\leq s\leq t$.
For that we recall that $Y_s= \bar{B}_s + \bar{M}_s$ where $\bar{B}$ is a drift part and $\bar{M}$ is a martingale part of $Y$, and we introduce
$$\bar{V}_s = e^{-\bar{M}_s}\int _0^s e^{\bar{M}_u} du$$
Then,
$$V_s= e^{-Y_s}\int _0^s e^{Y_u} du= e^{-\bar{M}_s-\bar{B}_s}\int _0^s e^{\bar{M}_u +\bar{B}_u} du$$
$$\leq \sup_{0\leq u\leq s} ( e^{\bar{B}_u - \bar{B}_s})\, \bar{V}_s\leq e^{Var(\bar{B})_t} \,\, \bar{V}_s$$
where $Var(\bar{B})_t$ is the variation of $\bar{B}$ on the interval $[0,t]$. This quantity is deterministic and bounded, hence, it is sufficient to prove uniform integrability of $(\bar{V}^{\alpha}_{s\wedge\tau_n})_{n\geq 1}$.
\par Next, we show that $(\bar{V}_s)_{0\leq s\leq t}$ is a submartingale w.r.t. a natural filtration of $Y$. Let $s'>s$, then
$$\hspace{-4cm}{\bf E}(\bar{V}_{s'}\,|\, \mathcal{F}_s)= {\bf E}(e^{-\bar{M}_{s'}}\int _0^{s'} e^{\bar{M}_u} du\,|\,  \mathcal{F}_s)=$$
 $${\bf E}(e^{-(\bar{M}_{s'}-\bar{M}_{s})}\,[\bar{V}_s + e^{-\bar{M}_s}\int _s^{s'} e^{\bar{M}_u} du]\,|\,  \mathcal{F}_s)\geq$$
$$\hspace{4cm}{\bf E}(e^{-(\bar{M}_{s'}-\bar{M}_{s})} \bar{V}_s\,|\,  \mathcal{F}_s) = \bar{V}_s\, {\bf E}(e^{-(\bar{M}_{s'}-\bar{M}_{s})})$$
The expression for ${\bf E}(e^{-(\bar{M}_{s'}-\bar{M}_{s})})$ can be find from the expression of the characteristic exponent of $Y$ without its drift part: 
$$\hspace{-8cm}{\bf E}(e^{-(\bar{M}_{s'}-\bar{M}_{s})}= $$
$$\hspace{2cm}\exp \left(\frac{1}{2}\int_s^{s'}c_udu +                   \int_s^{s'}\int _{\mathbb{R}\setminus\{0\}}(e^{-x}-1+x)\bar{\nu}(du,dx) \right)\geq 1.$$
Then, $(\bar{V}_s^{\alpha})_{0\leq s \leq t}$ is  a submartingale, and by Doob stopping theorem ($P$-a.s.)
$${\bf E}( \bar{V}^{\alpha}_s\,|\, \mathcal{F}_{s\wedge\tau_n})\geq 
\bar{V}^{\alpha}_{s\wedge\tau _n}.$$  Hence, for all $n\geq 1$, $c>0$ and ${\bf I}(\cdot)$ indicator function
$$\hspace{-0.5cm}{\bf E}( \bar{V}^{\alpha}_{s\wedge\tau _n} {\bf I}_{\{\bar{V}_{s\wedge\tau _n}>c\}}) \leq 
{\bf E}({\bf E}(\bar{V}_s^{\alpha}\,|\, \mathcal{F}_{s\wedge\tau _n}) {\bf I}_{\{{\bf E}(\bar{V}_s^{\alpha}\,|\, \mathcal{F}_{s\wedge\tau _n})>c\}})\leq$$
$$\hspace{5cm} c^{-\frac{\delta}{\alpha}}{\bf E}({\bf E}(\bar{V}_s^{\alpha}\,|\, \mathcal{F}_{s\wedge\tau_n})^{\frac{\alpha +\delta}{\alpha}})\leq c^{-\frac{\delta}{\alpha}}{\bf E}(\bar{V}_s^{\alpha +\delta})$$
Finally, we show in the same way as the proof of  \eqref{sup0} that 
$$\sup_{0\leq s\leq t}{\bf E}(\bar{V}_s^{\alpha +\delta})<\infty $$
and it proves the uniform integrability of the family  $(V^{\alpha}_{s\wedge\tau_n})_{n\geq 1}$  uniformly in $0\leq s\leq t$.
 \par After limit passage, we get that
\begin{equation}\label{rt2}
{\bf E}(V_s^{\alpha}) = - \int_0^{s}{\bf E}(V_u^{\alpha}) \,d\bar{\Phi}(u,\alpha ) + \alpha\int_0^s
{\bf E}(V_u^{\alpha -1})du
\end{equation}
We see that each term of this equation is differentiable w.r.t. $s$ for $s<t$. In fact, the family $(V^{\alpha}_s)_{0\leq s\leq t}$ is uniformly integrable and  the function $({\bf E}(V_s^{\alpha}))_{0\leq s \leq t}$ is continuous in $s$ as well as $(\bar{\Phi}(s,\alpha))_{0\leq s <t}$. We calculate the derivatives in $s$ of both sides of the above equation  and
we solve the corresponding linear equation to obtain:
$${\bf E}(V_s^{\alpha})= \alpha e^{-\bar{\Phi}(s,\alpha )}\int_0^s
{\bf E}(V_u^{\alpha -1})\,e^{\bar{\Phi}(u,\alpha )}\,du$$
Now, we write that $\bar{\Phi}(u,\alpha )-\bar{\Phi}(s,\alpha )=-\Phi(t-u,\alpha )+ \Phi (t-s,\alpha)$
and we let $s\rightarrow t-$ to get
$${\bf E}(V_t^{\alpha})=\alpha \int_0^t
{\bf E}(V_u^{\alpha -1})\,e^{-\Phi(t-u,\alpha )}\,du$$
Notice that $I_t=V_t$ and, hence, ${\bf E}(V_t^{\alpha})=m^{(\alpha)}_{t}$.
Since
$$V_u= \int_0^ue^{Y_v-Y_u}dv = \int_0^u e^{-(X_{(t-v)-}-X_{(t-u)-})}dv= \int_{t-u}^t e^{-(X_v-X_{(t-u)-})}dv
$$
we also have
 ${\bf E}(V_u^{\alpha -1})=m^{(\alpha -1)}_{t-u,t}$.
Then, we obtain \eqref{8b} after the change of  variables in integrals replacing $t-u$ by $u$.
\par  In particular case of Levy process, we use \eqref{8b} and we take into account that $ m^{(\alpha -1)}_{u,t}= m^{(\alpha -1)}_{t-u}$ and that $\Phi(u,\alpha)=\Phi (\alpha)\,u$. Then, after the change of  variables, the formula \eqref{8aab} follows as well as \eqref{8aa}.
\end{proof}
\begin{rem}\rm It should be noticed that the relation \eqref{8b} can be obtained with another technique (see \cite{SaWo}), based on the approach of \cite{BY}.
\end{rem}
\par Now we will apply our results to calculate the moments.
\begin{ex}\rm  We consider Levy process $L=(L_t)_{t\geq 0}$ with generating triplet $(b_0,c_0,K_0)$ starting from 0, and time changed by deterministic process $(\tau_t)_{t\geq 0}$ with $\tau_t= r\ln (1+t)$ with $r>0$, i.e. $X_t= L_{\tau_t}$ for $t\geq 0$. Then, by change of variables $u=r\ln(1+s)$ we get
$$I_t= \int^t_0 e^{-L_{r\ln (1+s)}}ds=\frac{1}{r} \int_0^{r\ln(1+t)} e^{-(L_u -u/r)}du=
\frac{1}{r} \int_0^{r\ln(1+t)} e^{-\tilde{L}_u }du$$
where $\tilde{L}$ is Levy process with generating triplet $(b_0- \frac{1}{r},c_0,K_0)$.
We denote by $\tilde{\Phi}$  the Laplace exponent of $\tilde{L}$. Then for $k\geq 0$,
$\tilde{\Phi}(k) = \Phi (k) -\frac{k}{r}$ and
$${\bf E}(I_t)= \frac{1}{r}\int_0^{r\ln(1+t)} e^{-\tilde{\Phi}(1)u} du=
\left\{\begin{array}{ll}
\frac{1-(1+t)^{-r\Phi(1) +1}}{r\Phi(1)-1}&\mbox{if}\, r\Phi(1)-1\neq 0,\\\\
\ln(1+t)&\mbox{otherwise.}
\end{array}\right. $$
\par For shifted process $X^{(s)}_u= L_{\tau_u}-L_{\tau_s}, u\geq s$ the corresponding moments
$$m^{(n)}_{s,t}= {\bf E}\left[\left(\int_s^te^{-(L_{\tau_u}-L_{\tau_s})}du\right)^n\,\right]=
{\bf E}\left[\left(\int_s^te^{-L_{(\tau_u -\tau_s)}}du\right)^n\,\right]$$
since $L$ is homogeneous process and 
$(L_{\tau_u}-L_{\tau_s})_{u\geq s}\stackrel{\mathcal{L}}{=}
(L_{\tau_u-\tau_s})_{u\geq s}$.
\par We change the variable putting $\tau_u-\tau_s= r\ln (1+u)-r\ln (1+s)=v-s$ where $v$ is new variable. We denote $v(s,t)= r\ln (\frac{1+t}{1+s})$ and we remark that $1+u= (1+s)\,\exp( (v-s)/r)$. Then,
$${\bf E}\left[\left(\int_s^te^{-(L_{\tau_u -\tau_s})}du\right)^n\,\right]={\bf E}\left[\left(\frac{1+s}{r}\int_s^{s+v(s,t)}e^{-\tilde{L}_{v-s}}dv\right)^n\,\right]$$
$$=\frac{(1+s)^n}{r^n}{\bf E}\left[\left(\int_0^{v(s,t)}e^{-\tilde{L}_{u}}du\right)^n\,\right]$$
Finally, we get for $n\geq 0$:
$$m^{(n)}_{s,t}= \frac{(1+s)^n}{r^n}\,\tilde{m}^{(n)}_{v(s,t)}$$
where $\tilde{m}^{(n)}_{v(s,t)}$ is n-th moment of the exponential functional of $\tilde{L}$ on $[0, v(s,t)]$.
We suppose for simplicity that $\tilde{\Phi}$ is strictly monotone on the interval $[0, n+1]$. Then,
using the integral equation of Theorem \ref{t1} and the expression for the moments of Levy processes given in Corollary \ref{c0} below, and the fact that
$$e^{- \Phi(s,n+1)} = e^{-\Phi(n+1) \tau_s}= (1+s)^{-r\Phi(n+1)} $$
we get for $n\geq 1$:
$$m^{(n+1)}_{t}= \frac{(n+1)!}{r^n}\sum_{k=0}^{n-1} 
\int_0^t (1+s)^{q(n)}\,\frac{\, e^{-\tilde{\Phi}(k)v(s,t)}-e^{-\tilde{\Phi}(n)v(s,t)}}{\displaystyle\prod_{0\leq i\leq n,\, i\neq k}\,(\tilde{\Phi}(i)-\tilde{\Phi}(k)) }\,ds
$$ 
where $q(n)= n-r\Phi(n+1)$ and $\tilde{\Phi}(k)$ is the Laplace exponent of Levy process $\tilde{L}$, $\tilde{\Phi}(k)= \Phi(k)-\frac{k}{r}$, $1\leq k\leq n$.
To express the final result we put $$\rho(k)= r\Phi(k)-k,\,\,\,
 \gamma(n,k)=n-k-r(\Phi(n+1)-\Phi(k))$$ and
$$Q_t(n,k)= \frac{(1+t)^{\gamma(n,k)+1}-1}{(\gamma(n,k)+1)(1+t)^{\rho(k)}}$$
Then, after the integration we find that  for $n\geq 1$
$${\bf E}(I^{n+1}_t)=m^{(n+1)}_{t}= (n+1)!\,
\sum_{k=0}^{n-1} \,\frac{\, Q_t(n,k)-Q_t(n,n)}{\displaystyle\prod_{0\leq i\leq n,\, i\neq k}\,(\rho(i)-\rho(k)) } $$
\end{ex}
\begin{rem}\rm It is clear that the explicit formulas for the moments in non-homogeneous case will be rather exceptional. For numerical calculus the  following formula  could be useful: for all $0\leq s\leq t$
$$m^{(\alpha)}_{s, t}=\alpha \int _s^t m^{(\alpha -1)}_{u,t}
 \,\,e^{\Phi (u,\alpha)-\Phi (s,\alpha)}\, du$$
To obtain this formula it is sufficient to find the Laplace exponent of shifted process
 $(X^{(s)}_u)_{s\leq u\leq t}$ with $X^{(s)}_u= X_u-X_s$. Since $X$ is a PII, the variables $X_s$ and $X_u - X_s$ are independent, and 
 $${\bf E}(e^{-\alpha X_u})= {\bf E}(e^{-\alpha X_s})\,{\bf E}(e^{-\alpha (X_u-X_s)})$$
 and then, the Laplace exponent of shifted process $X^{(s)}$ is given by:
 $$\Phi^{(s)}(u,\alpha )=\Phi(u, \alpha) - \Phi(s, \alpha ).$$
\end{rem} 
\end{section}
\begin{section}{Positive moments of $I_t$ and $I_{\infty}$ for Levy processes}
\par Now we suppose that $X$ is Levy process with the parameters $(b_0,c_0, K_0)$. In this case the relation  \eqref{cond} become:
\begin{equation}\label{cond1}
\int_{\mathbb{R}\setminus\{0\}} (x^2\wedge |x|) K_0(dx)  < \infty. 
\end{equation}
It should be noticed that in \cite{BY} the condition on Levy measure was
\begin{equation}\label{cond2}
\int_{\mathbb{R}\setminus\{0\}} (|x|\wedge 1) K_0(dx)  < \infty
\end{equation} 
and this condition is stronger at zero and weaker at infinity then \eqref{cond1}.
When $X$ is Levy process,  the condition \eqref{rt1} become: there exists $\delta >~0$
\begin{equation}\label{l1}
\int_{x<-1} e^{-(\alpha + \delta )  x} K_0(dx)  < \infty .
\end{equation}
\begin{corr} \label{c0} Let $n\geq 1$  and suppose that $\Phi$ is bijective on $[0,n]\cap\mathbb{N}$. Then,
$${\bf E}(I_{t}^n) = n! \sum_{k=0}^{n-1} \,\frac{\, e^{-\Phi(k)t}-e^{-\Phi(n)t}}{\displaystyle\prod_{0\leq i\leq n,\, i\neq k}\,(\Phi(i)-\Phi(k)) }$$
\end{corr}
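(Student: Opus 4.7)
The plan is to prove the corollary by induction on $n$, using the Lévy-case recurrence \eqref{8aab} from Theorem \ref{t1}, namely
$$m_t^{(n)} = n\, e^{-\Phi(n)t} \int_0^t m_s^{(n-1)}\, e^{\Phi(n)s}\, ds.$$
The hypothesis that $\Phi$ is bijective on $[0,n]\cap\mathbb{N}$ guarantees that every denominator appearing in the stated formula is nonzero, and, together with condition \eqref{l1} for $\alpha=n$ (implicit via the applicability of Theorem \ref{t1}), that $m_t^{(n)}<\infty$.

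For the base case $n=1$, plugging $m_s^{(0)}=1$ and $\Phi(0)=0$ into the recurrence yields
$$m_t^{(1)} = e^{-\Phi(1)t}\int_0^t e^{\Phi(1)s}\,ds = \frac{1-e^{-\Phi(1)t}}{\Phi(1)},$$
which matches the stated formula: the sum has the single term $k=0$ with denominator $\Phi(1)-\Phi(0)=\Phi(1)$.

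For the inductive step, assume the formula at $n-1$ and substitute it into the recurrence. Each elementary integral $\int_0^t\bigl(e^{-\Phi(k)s}-e^{-\Phi(n-1)s}\bigr)e^{\Phi(n)s}\,ds$ produces, after multiplication by the outer factor $n\,e^{-\Phi(n)t}$, a main piece
$$\frac{n!\,\bigl(e^{-\Phi(k)t}-e^{-\Phi(n)t}\bigr)}{(\Phi(n)-\Phi(k))\prod_{0\le i\le n-1,\,i\ne k}(\Phi(i)-\Phi(k))}$$
and a residual piece proportional to $e^{-\Phi(n-1)t}-e^{-\Phi(n)t}$. Summed over $k=0,\dots,n-2$, the main pieces are precisely the $k\le n-2$ terms of the target formula, since absorbing the factor $\Phi(n)-\Phi(k)$ into the denominator product extends the index set from $\{0,\dots,n-1\}\setminus\{k\}$ to $\{0,\dots,n\}\setminus\{k\}$.

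It remains to match the residual contribution to the missing $k=n-1$ term. After cancelling the common factor $e^{-\Phi(n-1)t}-e^{-\Phi(n)t}$ and the common factor $1/(\Phi(n)-\Phi(n-1))$ (which is absorbed into the extended product on the target side), the identity reduces to
$$\sum_{k=0}^{n-1}\frac{1}{\prod_{0\le i\le n-1,\,i\ne k}(\Phi(i)-\Phi(k))}=0\qquad(n\ge 2).$$
This is the classical Lagrange/partial-fraction identity: the right-hand side is the coefficient of $x^{n-1}$ in the Lagrange interpolation of the constant polynomial $1$ at the $n$ distinct nodes $\Phi(0),\dots,\Phi(n-1)$, and must vanish because $1$ has degree $0<n-1$. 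This closes the induction. The main obstacle is the algebraic bookkeeping in the inductive step: correctly tracking which denominator product gets extended and recognizing that the residual telescopes via the Lagrange identity; once these are in place, the rest is routine.
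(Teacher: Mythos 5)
Your proof is correct and takes the same route as the paper, whose entire argument is the single sentence that the formula ``follows from \eqref{8aab} by induction.'' You have simply supplied the details the paper omits --- the base case, the elementary integrations in the inductive step, and the Lagrange/divided-difference identity that makes the residual terms collapse into the missing $k=n-1$ summand --- all of which check out.
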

\begin{proof}The formula follows from \eqref{8aab} by induction.
\end{proof}

\par We present here two examples, one of them is related with  Brownian motion, and second one with compound Poisson process.
\begin{ex} \rm
Let $X$ be  a Brownian motion with drift $\mu>0$ and diffusion coefficient $\sigma >0$, i.e. $X_t= \mu t +\sigma W_t$ where $W=(W_t)_{t\geq0}$ is standard Brownian motion. Then, $\Phi(\alpha) = \alpha \mu - \frac{\alpha^2\,\sigma^2}{2}$ and if $\frac{2\mu}{\sigma^2}$ is not an integer, we get:
$${\bf E}(I_{t}^n) = n! \sum_{k=0}^{n-1} \,\frac{\, e^{-(k\mu - k^2\sigma^2/2)t}-e^{-(n\mu - n^2\sigma^2/2)t}}{\displaystyle\prod_{0\leq i\leq n,\, i\neq k}\,(i-k)(\mu - (i+k)\sigma^2/2) }$$
\end{ex}
\begin{ex}\rm
Let $X$ be compound Poisson process such that $X_t= \sum_{k=1}^{N_t} U_k$ where $(U_k)_{k\geq 0}$ is a sequence of independent random variables with distribution function $F$ and $N$ is a homogeneous Poisson process with intensity $\lambda >0$. Then, $\Phi (\alpha) = \lambda\int_{\mathbb{R}\setminus\{0\}} (1-e^{-\alpha x})F(dx)$. In particular, if the $U_k$'s are standard normal variables, we get that $\Phi(\alpha)=\lambda ( 1-e^{\alpha^2\,/2})$ and
$${\bf E}(I_{t}^n) = n! \sum_{k=0}^{n-1} \,\frac{\, \exp(-\lambda t(1-e^{k^2/2}))-\exp(-\lambda t(1-e^{n^2/2}))}{\lambda^n\,\displaystyle\prod_{0\leq i\leq n,\, i\neq k}\,(e^{k^2/2}- e^{i^2/2}) }$$
\end{ex}
\par We introduce the Laplace-Carson transform $\hat{m}^{(\alpha )}_q$ of $m^{(\alpha )}_t$ of the parameter $q>~0$:
$$\hat{m}^{(\alpha )}_q= \int_0^{\infty}qe^{-qt}m^{(\alpha)}_tdt$$
This integral is always well-defined in general sense, since the integrand is positive.
\begin{corr} \label{c1}(cf. \cite{BY}) Let $X$ be a Levy process which verifies \eqref{cond1} and \eqref{l1}, and such that for fixed $\alpha\geq 1$, $m^{(\alpha)}_{\infty}<\infty$. Then the Laplace-Carson transforms $\hat{m}^{(\alpha)}_{q}$ and $\hat{m}^{(\alpha -1)}_{q}$ of $m^{(\alpha)}_{t}$ and $m^{(\alpha -1)}_{t}$ respectively, are well-defined and  we have a recurrent formula:
\begin{equation}\label{9a}
\hat{m}^{(\alpha)}_q\,(q + \Phi(\alpha ))= \alpha \hat{m}^{(\alpha -1)}_q
\end{equation}
In particular,  for integer $n\geq 1$ such that $m^{(n)}_{\infty}<\infty$ we get:
\begin{equation}\label{10a}
\hat{m}^{(n)}_q=\frac{n!}{\prod_{k=1}^n(q + \Phi(k))}
\end{equation} 
As a consequence,
$$ {\bf E}(I_{\infty}^n) = \frac{n!}{\prod_{k=1}^n \Phi(k)}$$
Moreover, if all positive moments of $I_{\infty}$ exist and the series below  converges,  then   the Laplace transform of $I_{\infty}$ of parameter $\beta\geq 0$ is given by:
\begin{equation}\label{lap} {\bf E}(e^{-\beta\, I_{\infty}}) = \sum _{n=0}^{\infty} \frac{(-1)^n\beta ^n}{\prod_{k=1}^n \Phi(k)}
\end{equation}
\end{corr}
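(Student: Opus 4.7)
The plan is to take the Laplace-Carson transform of the ODE \eqref{8aa} from Theorem \ref{t1}. Since $I_t$ is nondecreasing in $t$, $m^{(\alpha)}_t \uparrow m^{(\alpha)}_\infty < \infty$, so $\hat m^{(\alpha)}_q \le m^{(\alpha)}_\infty$; and since $\alpha \ge 1$, Jensen's inequality gives $m^{(\alpha-1)}_t \le \max\bigl(1,(m^{(\alpha)}_t)^{(\alpha-1)/\alpha}\bigr)$, which bounds $\hat m^{(\alpha-1)}_q$ as well. Multiplying \eqref{8aa} by $q e^{-qt}$ and integrating on $[0,\infty)$, an integration by parts of the left-hand side yields
\[
\int_0^\infty q e^{-qt}\,\frac{d m^{(\alpha)}_t}{dt}\,dt = \bigl[q e^{-qt} m^{(\alpha)}_t\bigr]_{0}^{\infty} + q\,\hat m^{(\alpha)}_q = q\,\hat m^{(\alpha)}_q,
\]
since $m^{(\alpha)}_0 = 0$ (because $I_0 = 0$ and $\alpha \ge 1$) and $e^{-qt} m^{(\alpha)}_t \to 0$ at infinity. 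The right-hand side integrates to $-\Phi(\alpha)\hat m^{(\alpha)}_q + \alpha \hat m^{(\alpha-1)}_q$; rearranging gives exactly \eqref{9a}.

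Equation \eqref{10a} then follows by iterating \eqref{9a} $n$ times starting from $\hat m^{(0)}_q = \int_0^\infty qe^{-qt}\,dt = 1$. To obtain ${\bf E}(I_\infty^n)$, I would let $q \to 0^+$ in \eqref{10a}. Splitting $\hat m^{(n)}_q = \int_0^\infty q e^{-qt} m^{(n)}_t\,dt$ at a large threshold $T$ and using that $m^{(n)}_t$ is bounded and monotone convergent to $m^{(n)}_\infty$ shows $\lim_{q\to 0^+} \hat m^{(n)}_q = m^{(n)}_\infty$; on the right-hand side, the finiteness of the limit forces $\Phi(k) > 0$ for each $k \in \{1,\dots,n\}$ (otherwise some factor $q+\Phi(k)$ would vanish as $q\to 0^+$ while the left-hand side stays finite), and the product limit equals $n!/\prod_{k=1}^n \Phi(k)$.

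Finally, for \eqref{lap} I would expand the exponential in a power series and exchange expectation and summation:
\[
{\bf E}(e^{-\beta I_\infty}) = {\bf E}\Bigl(\sum_{n=0}^\infty \frac{(-\beta)^n}{n!}\,I_\infty^n\Bigr) = \sum_{n=0}^\infty \frac{(-\beta)^n}{n!}\,{\bf E}(I_\infty^n) = \sum_{n=0}^\infty \frac{(-1)^n\beta^n}{\prod_{k=1}^n \Phi(k)}.
\]
Dominated convergence justifies the interchange once the partial sums are majorised in modulus by $\sum_{n\ge 0} \beta^n\,{\bf E}(I_\infty^n)/n! = \sum_{n\ge 0}\beta^n/\prod_{k=1}^n \Phi(k)$, which is precisely the convergence hypothesis. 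The only slightly delicate points are the limit $q\to 0^+$ in step two and this interchange in the last display; both reduce to the explicit integrability and convergence assumptions stated in the corollary, so no substantial obstacle is encountered.
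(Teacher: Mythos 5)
Your proposal is correct and, for the two transform identities, follows the same route as the paper: \eqref{9a} is obtained by applying the Laplace--Carson transform to the ODE \eqref{8aa} (the paper simply states that it ``follows directly'' from \eqref{8aa}), and \eqref{10a} by iteration from $\hat m^{(0)}_q=1$. The differences are in the two limiting arguments. For $\lim_{q\to0^+}\hat m^{(n)}_q=m^{(n)}_\infty$ you use that $m^{(n)}_t$ is bounded by $m^{(n)}_\infty$ and monotone, splitting the integral at a threshold; the paper instead introduces $M^{(n)}_t=\int_0^t m^{(n)}_s\,ds$, integrates by parts and uses only the Ces\`aro convergence $M^{(n)}_t/t\to m^{(n)}_\infty$ --- an Abelian argument that would survive without monotonicity, but in the present setting your direct argument is perfectly adequate and simpler. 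For \eqref{lap} the approaches genuinely diverge: you dominate the partial sums of $\sum(-\beta I_\infty)^n/n!$ by $e^{\beta I_\infty}$, whose integrability is equivalent to \emph{absolute} convergence of $\sum\beta^n/\prod_{k=1}^n\Phi(k)$, whereas the hypothesis of the corollary only asserts convergence of the alternating series; since the series is alternating with positive $\Phi(k)$, these need not coincide a priori. The paper's use of the Taylor expansion with Lagrange remainder avoids this, because the remainder is bounded by $\beta^N{\bf E}(I_\infty^N)/N!=\beta^N/\prod_{k=1}^N\Phi(k)$, which tends to zero as soon as the series converges. So your proof of \eqref{lap} is valid under a (possibly strictly) stronger reading of the hypothesis; replacing the dominated-convergence step by the Lagrange-remainder estimate closes that small gap. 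A cosmetic remark: finiteness of the limit in \eqref{10a} only forces $\Phi(k)\neq0$, not $\Phi(k)>0$, but this does not affect the stated formula.
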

\begin{proof} The first equality for the Laplace-Carson transforms follows directly from   \eqref{8aa}. The second equality can be obtained as particular case from the first one, by recurrence. 
\par For the third one we prove that $\hat{m}^{(n)}_q\rightarrow m^{(n)}_{\infty}$ as $q\rightarrow 0$. In fact,
  $m_t^{(n)}\rightarrow m_{\infty}^{(n )}$ as $t\rightarrow +\infty $ and
 $$\lim_{t\rightarrow +\infty}\frac{1}{t}\int _0^t\,m_s^{(n )} ds = m_{\infty }^{(n)}.$$ 
Let us denote $M^{(n)}_t=\displaystyle\int _0^t\,m_s^{(n )} ds$. By integration by part formula  we have for each $t>~0$ :
$$\int_0^{t} qe^{-qs} m_s^{(n )} ds= [q\,e^{-qt}M^{(n)}_t]_0^t + \int_0^{t} q^2e^{-qs}M_s^{(n )} ds$$
Then, since $M_0^{(n)}=0$ and $M^{(n)}_t/t \rightarrow m_{\infty}^{(n )}$ as $t\rightarrow +\infty$,
$$\hat{m}^{(n)}_q=\int_0^{\infty} qe^{-qs} m_s^{(n)} ds =  q^2 \int_0^{\infty}e^{-qs}M^{(n)}_s ds $$
Since $q^2\,\int_0^{\infty}s\, e^{-qs} ds=1$, we get
$$\hat{m}^{(n)}_q-m_{\infty}^{(n)} = q^2\,\int_0^{\infty} e^{-qs}(M^{(n)}_s-s\,m_{\infty}^{(n)}) ds$$
For each $\epsilon >0$ there exists $t_{\epsilon}$ such that for $s\geq t_{\epsilon}$, $|\frac{M^{(n)}_s}{s}-m_{\infty}^{(n)}|\leq \epsilon$.
Then, 
$$\hspace{-3cm}|\,\hat{m}^{(n)}_q-m_{\infty}^{(n)}\,|\leq  q^2\,\int_0^{t_{\epsilon}} e^{-qs}|M^{(n )}_s-s\,m_{\infty}^{(n)}| ds+$$
$$\hspace{1cm}q^2\,\int_{t_{\epsilon}}^{\infty}s\, e^{-qs}\left|\frac{M^{(n)}_s}{s}-m_{\infty}^{(n)}\right| ds\leq 
q^2\,\int_0^{t_{\epsilon}} e^{-qs}|M^{(n)}_s- s\,m_{\infty}^{(n)}| ds + \epsilon$$
We notice that
$$\lim_{q\rightarrow 0}q^2\int_0^{t_{\epsilon}} e^{-qs}\,|\,M^{(n)}_s-s\,m_{\infty}^{(n )}\,|\, ds= 0$$
Then, taking $\lim_{\epsilon\rightarrow 0}\lim_{q\rightarrow 0}$ in the previous inequality we get  that $$\hat{m}^{(n)}_q\rightarrow~ m^{(n)}_{\infty}$$ as $q\rightarrow 0$. Finally, we take the limit as $q\rightarrow 0$ in second equality, to obtain the third one. The formula for the Laplace transform of $I_{\infty}$ can be proved by using Taylor expansion with remainder in Lagrange form.
\end{proof}

\begin{ex}\rm Let  $X$ be homogeneous Poisson process of intensity $\lambda >0$. Then all positive moments of $I_{\infty}$ exist, and we have for $0\leq \beta <\lambda$
$$ {\bf E}(e^{-\beta\, I_{\infty}}) = \sum _{n=0}^{\infty} \frac{(-1)^n\beta ^n}{\lambda^n\,\prod_{k=1}^n (1-e^{-k})}.$$

\end{ex}
\begin{corr} \label{c2} Let 
$\alpha_0 = \inf\{\alpha >0\,|\, \Phi (\alpha)\leq 0\}$ with $\inf\{\emptyset\}=+\infty$. Then, ${\bf E}(I^{n}_{\infty})<~\infty$ if and only if $1\leq n<\alpha_0$. In particular,  for Brownian motion with drift coefficient $b_0$ and diffusion coefficient  $c_0\neq~0$, 
$$\Phi (n)= nb_0 - \frac{1}{2}n^2 c_0$$ and the moment of $I_{\infty}$ of order $n\geq 1$ will exist if $n< \frac{2b_0}{c_0}$. \\
If $X$ is a subordinator with non-zero Levy measure $K_0$ such that \eqref{cond2} holds, then 
$$\Phi (n)=n[b_0 - \int_{\mathbb{R}^+\setminus\{0\}}xK_0(dx)\,] - \int_{\mathbb{R}^+\setminus\{0\}}(e^{-nx}-1)K_0(dx),$$
and under the condition
  $$b_0 - \int_{\mathbb{R}^+\setminus\{0\}}xK_0(dx)\,\geq 0,$$  all moments of $I_{\infty}$ exist.
\end{corr}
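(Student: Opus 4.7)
My plan is to combine the closed-form moment formula from Corollary \ref{c1} with a convexity-based dichotomy for the Laplace exponent $\Phi$, and to settle the remaining case by a self-similarity argument based on independent and stationary increments. The key structural fact is that $\Phi$ is concave on its domain of finiteness: differentiating under the integral sign gives $\Phi''(\alpha) = -c_0 - \int_{\mathbb{R}\setminus\{0\}} x^2 e^{-\alpha x} K_0(dx) \leq 0$. Since $\Phi(0) = 0$, a secant comparison with the chord from $0$ to $\alpha_0$ yields $\Phi > 0$ on $(0, \alpha_0)$ and $\Phi \leq 0$ on $[\alpha_0, \infty)$. In particular, for an integer $n \geq 1$, we have $\Phi(k) > 0$ for every $k \in \{1, \ldots, n\}$ exactly when $n < \alpha_0$.

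For the sufficiency direction ($n < \alpha_0$), I would show by induction on $k \leq n$ that $m_t^{(k)}$ is bounded uniformly in $t > 0$. The base case $m_t^{(0)} = 1$ is trivial, and the Lévy recurrence \eqref{8aab} reads $m_t^{(k)} = k e^{-\Phi(k) t} \int_0^t m_s^{(k-1)} e^{\Phi(k) s}\,ds$, so combining $\Phi(k) > 0$ with a bound $m_s^{(k-1)} \leq M_{k-1}$ immediately yields $m_t^{(k)} \leq k M_{k-1}/\Phi(k)$. Monotone convergence then gives $m_\infty^{(n)} < \infty$, and Corollary \ref{c1} supplies the exact value $n!/\prod_{k=1}^n \Phi(k)$.

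For the necessity direction ($n \geq \alpha_0$, so $\Phi(n) \leq 0$), I argue by contradiction: if $m_\infty^{(n)} < \infty$ then $I_\infty < \infty$ almost surely, and the independent and stationary increments of $X$ yield the pathwise identity $I_\infty = I_t + e^{-X_t} \tilde I_\infty$, where $\tilde I_\infty := \int_0^\infty e^{-(X_{t+s}-X_t)}\,ds$ is distributed as $I_\infty$ and is independent of $(I_t, X_t)$. Expanding via the elementary inequality $(a+b)^n \geq a^n + b^n$ for $a,b\geq 0$ and taking expectations gives
\[
m_\infty^{(n)} \;\geq\; m_t^{(n)} + e^{-\Phi(n) t}\, m_\infty^{(n)}.
\]
When $\Phi(n) < 0$, choosing $t$ large so that $e^{-\Phi(n)t} > 1$ forces $m_\infty^{(n)} > m_\infty^{(n)}$, which is impossible for a positive finite quantity; when $\Phi(n) = 0$, the same inequality reduces to $m_t^{(n)} \leq 0$, contradicting $m_t^{(n)} > 0$ for any $t > 0$.

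The two corollary examples then follow by direct inspection: for Brownian motion with drift, $\Phi(\alpha) = \alpha b_0 - \alpha^2 c_0 / 2$ has positive root $2b_0/c_0$, so $\alpha_0 = 2b_0/c_0$; for a subordinator one rewrites $\Phi(n) = n\bigl(b_0 - \int x\,K_0(dx)\bigr) + \int (1 - e^{-nx})\,K_0(dx)$, which is strictly positive for every integer $n \geq 1$ as soon as $b_0 \geq \int x\, K_0(dx)$ and $K_0 \neq 0$, giving $\alpha_0 = \infty$. The main technical subtlety is the boundary case $\Phi(n) = 0$ in the necessity direction, where the exponential factor alone is insufficient to close the argument; retaining the strictly positive term $m_t^{(n)}$ via $(a+b)^n \geq a^n + b^n$ is what makes the contradiction work.
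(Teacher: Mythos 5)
Your argument is correct, and it splits naturally into a half that matches the paper and a half that does not. The sufficiency direction ($\Phi(k)>0$ for $k=1,\dots,n$ implies ${\bf E}(I_\infty^n)<\infty$) is essentially the paper's proof: induction through the L\'evy recurrence \eqref{8aab}, using $\Phi(n)>0$ to bound $\int_0^t e^{-\Phi(n)s}\,ds$ uniformly in $t$, then monotone convergence. The necessity direction is where you genuinely diverge. The paper deduces it from Corollary \ref{c1}: the Abelian limit $\hat m^{(n)}_q\to m^{(n)}_\infty$ as $q\to 0$ turns finiteness and positivity of the $n$-th moment into $\Phi(n)>0$, and concavity of $\Phi$ with $\Phi(0)=0$ then gives $n<\alpha_0$. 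You instead use the pathwise renewal identity $I_\infty=I_t+e^{-X_t}\tilde I_\infty$, with $\tilde I_\infty$ independent of $(I_t,X_t)$ and distributed as $I_\infty$, plus the superadditivity $(a+b)^n\ge a^n+b^n$, to obtain $m^{(n)}_\infty\ge m^{(n)}_t+e^{-\Phi(n)t}\,m^{(n)}_\infty$ and a contradiction whenever $\Phi(n)\le 0$. This buys you something real: the argument is self-contained (it does not need the Laplace--Carson machinery of Corollary \ref{c1}), it treats the boundary case $\Phi(n)=0$ explicitly rather than implicitly through a vanishing product, and it extends without change to $\Phi(n)=-\infty$, where ${\bf E}(e^{-nX_t})=+\infty$ forces $m^{(n)}_\infty=+\infty$ directly. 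Both your proof and the paper's share the same tacit assumptions --- that $\Phi(\alpha_0)\le 0$ (which could fail only in the degenerate situation where $\Phi$ drops to $-\infty$ immediately to the right of $\alpha_0$ while still positive at $\alpha_0$) and that the integrability hypothesis \eqref{l1} needed for \eqref{8aab} holds --- so neither point is a defect specific to your write-up.
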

\begin{proof} Let $n=\sup\{ k\geq1 : {\bf E}(I^k_{\infty})<\infty\}$. If $n=+\infty$, then $\Phi(k)>0$ for all $k\geq 1$ and $\alpha_0=+\infty$. If $1\leq n<+\infty$,
from  Corollary \ref{c1} we get that $\Phi(n)>0$.
Since $\Phi(\alpha)$ is concave function such that $\Phi (0)=0$, we conclude that $n< \alpha_0$.
Conversely, substituting $t-s$ by $s$ in \eqref{8aab} of Theorem \ref{t1} we get:
$${\bf E}(I^n_{t}) \leq n \,{\bf E}(I^{n-1}_{\infty})\int _0^t e^{-\Phi(n)s}\,ds$$
If $1\leq n<\alpha_0$,  $\Phi (n) >0$, and the integral on the r.h.s. of this inequality  converge as $t\rightarrow \infty$. By induction, it gives that ${\bf E}(I^{n}_{\infty})<~\infty$. Moreover, the results for continuous case and the case when $X$ is a subordinator, follow immediately  from the expression of $\Phi (k)$.
\end{proof}
\begin{ex}\rm Let $X$ be time changed Brownian motion, namely $X_t= \mu \tau_t +\sigma W_{\tau_t}$ where $W=(W_t)_{t\geq 0}$ is standard Brownian motion, $\mu\in\mathbb{R}
$, $\sigma>0$ and $\tau_t$ is first hitting time of the level $t$ of the independent (from $W$) standard Brownian motion $B=(B_t)_{t\geq 0}$ with the drift coefficient $b>0$. Then, as known,
$ \Phi(\alpha)= (b^2+2\alpha \mu-\alpha^2\sigma^2)^{1/2}-b$ with $b^2+2\alpha \mu-\alpha^2\sigma^2>0$ (see for instance \cite{BS}, formula 2.0.1, p.295). Then, ${\bf E}(I^n_{\infty})<\infty$ if and only if $2\mu-n\sigma >0$.
 
\end{ex}
\begin{ex}\rm Let $X$ be pure discontinuous Levy process with Levy measure
$$K_0(dx) = \frac{c\exp(-Mx)}{x^{1+\beta}}{\bf I}_{]0,+\infty[}(x)dx$$
where $c>0, M>0, 0<\beta<1 $. Then, $$\Phi(\alpha) = \frac{c\,\Gamma(1-\beta )}{-\beta} ((M+\alpha)^{\beta}-M^{\beta} - \alpha M^{\beta-1}\beta).$$ Then, $\Phi(\alpha) >0$ for $\alpha \geq 1$, and all moments of $I_{\infty}$ exist.
\end{ex}
\end{section}
\begin{section}{Recurrent formulas for the Mellin transform  of $I_t$ 
with $\alpha <0$.}
\par In the following Theorem \ref{t2}, we derive the  integro-differential equation for the Mellin transform  $m_t^{(\alpha )}$  of $I_t$ with $\alpha<0$.
\begin{thm}\label{t2} Let $\alpha< 0$  and $t>0$ be fixed. Suppose that \eqref{abscont} and\eqref{cond} hold and 
\begin{equation}\label{rt11}
\int_0^t\int_{x>1} e^{(|\alpha |+1)  x} K_s(dx)\, ds < \infty .
\end{equation}
Then, for $s>0$,  $m^{(\alpha)}_{s,t}$ is well-defined as well as $m^{(\alpha -1)}_{s,t}$ and we get
the following recurrent  differential equation:
\begin{equation}\label{8aaa}
m^{(\alpha -1)}_{s,t} =  \frac{1}{\alpha}\left(  m^{(\alpha )}_{s,t}\,H^{(\alpha)}_s-\frac{d}{ds} m^{(\alpha )}_{s,t}\right)
\end{equation}
In the case of Levy process $X$ we have:
\begin{equation}\label{levy2}
m^{(\alpha -1)}_{s} =  \frac{1}{\alpha} \left( m^{(\alpha )}_{s}\,\Phi (\alpha)+\frac{d}{ds} m^{(\alpha )}_{s}\right)
\end{equation}
\end{thm}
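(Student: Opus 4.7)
The argument will follow the scheme of Theorem~\ref{t1} applied to the time-reversed process. With $V_s=e^{-Y_s}\int_0^s e^{Y_u}\,du$ one has $\mathbf{E}(V_s^\alpha)=m^{(\alpha)}_{t-s,t}$, and I will apply It\^o's formula to $V_s^\alpha$, localize, take expectations, and differentiate in~$s$. For $\alpha<0$ the necessary modifications are (i) starting the argument from some $s_0>0$, since $V_s\to 0$ as $s\to 0^+$ makes $V^{\alpha-1}$ non-integrable near $0$; and (ii) using the integrability margin $|\alpha|+1$ in \eqref{rt11} to control the positive-jump contribution $\int(e^{|\alpha|x}-1)\bar K_s(dx)$ that arises in $d\bar\Phi(\cdot,\alpha)$.

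I would first establish that $m^{(\alpha)}_{s,t}$ and $m^{(\alpha-1)}_{s,t}$ are finite for $s\in(0,t)$: since $x\mapsto x^\alpha$ is convex on $(0,\infty)$ when $\alpha<0$, Jensen's inequality (in the spirit of \eqref{jensen}) yields
\begin{equation*}
m^{(\alpha)}_{s,t}\le(t-s)^{\alpha-1}\int_s^t \mathbf{E}\bigl(e^{-\alpha(X_u-X_{s-})}\bigr)\,du,
\end{equation*}
which is finite by Kruglov's theorem under \eqref{rt11} (cf.\ Proposition~\ref{p2}); the bound for $\alpha-1$ uses that $|\alpha-1|=|\alpha|+1$ is precisely the exponent appearing in \eqref{rt11}. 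Next, for fixed $0<s_0<s<t$, introduce the stopping times $\tau_n=\inf\{u\ge s_0: V_u\le 1/n\text{ or }V_u\ge n\text{ or }e^{-Y_u}\ge n\}$ and apply It\^o's formula to $V^\alpha$ on $[s_0,\tau_n]$ using the differential relations \eqref{v3}--\eqref{v4}, which hold for any real $\alpha$ as long as $V>0$. Substituting the decomposition \eqref{v5}, localizing the remaining local martingale by a further sequence $\tau'_n$, setting $\bar\tau_n=\tau_n\wedge\tau'_n$, and taking expectations with the projection theorem give
\begin{equation*}
\mathbf{E}(V^\alpha_{s\wedge\bar\tau_n})-\mathbf{E}(V^\alpha_{s_0\wedge\bar\tau_n})=\alpha\,\mathbf{E}\!\int_{s_0\wedge\bar\tau_n}^{s\wedge\bar\tau_n}\!\!V_u^{\alpha-1}\,du-\int_{s_0\wedge\bar\tau_n}^{s\wedge\bar\tau_n}\!\mathbf{E}(V_{u-}^\alpha)\,d\bar\Phi(u,\alpha).
\end{equation*}

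To complete the argument I would pass to the limit $n\to\infty$: uniform integrability of $(V^\alpha_{s\wedge\bar\tau_n})_n$ is obtained, as in Theorem~\ref{t1}, by comparison with the martingale-part analogue $\bar V$ and an $L^{(|\alpha|+1)/|\alpha|}$ bound, the extra $+1$ in the exponent again being furnished by \eqref{rt11}; dominated convergence handles the drift integral, using that $\bar H^{(\alpha)}$ is locally integrable on $(0,t)$ by Step~1. Differentiating in $s\in(0,t)$ the resulting integral equation, and using $\tfrac{d}{ds}\bar\Phi(s,\alpha)=\bar H^{(\alpha)}_s$, gives $\tfrac{d}{ds}\mathbf{E}(V^\alpha_s)=\alpha\,\mathbf{E}(V^{\alpha-1}_s)-\mathbf{E}(V^\alpha_s)\bar H^{(\alpha)}_s$. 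Changing variable $s\mapsto t-s$ so that $\mathbf{E}(V^\alpha_{t-s})=m^{(\alpha)}_{s,t}$, $\bar H^{(\alpha)}_{t-s}=H^{(\alpha)}_s$ and the derivative flips sign, and solving for $m^{(\alpha-1)}_{s,t}$, yields \eqref{8aaa}. In the L\'evy case $H^{(\alpha)}_s=\Phi(\alpha)$ is constant and $m^{(\alpha)}_{s,t}$ depends only on $t-s$, so the same procedure gives \eqref{levy2} directly.

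The main technical difficulty lies in the limit passage just above: the integrand $V_u^{\alpha-1}$ is singular near $u=0$, and the contribution of positive jumps to $d\bar\Phi(u,\alpha)$ involves the exponentially large term $\int(e^{|\alpha|x}-1)\bar K_u(dx)$. Both obstacles are handled simultaneously by the two complementary devices identified above: starting the It\^o expansion at $s_0>0$ rather than $0$, and exploiting the additional $+1$ in the exponent of \eqref{rt11} to produce the $L^p$ room needed for uniform integrability.
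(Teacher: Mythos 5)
Your proposal follows essentially the same route as the paper's proof: time reversal, It\^o's formula for $V^{\alpha}$ started from a fixed positive time, localization with a lower bound on $V$ (so that $V^{\alpha}$ stays bounded for $\alpha<0$), uniform integrability via the submartingale comparison and the extra $+1$ in the exponent of \eqref{rt11}, and differentiation of the resulting integral identity with the sign flip under $s\mapsto t-s$. The argument is correct and matches the paper's proof in all essential points.
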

\begin{proof}The proof of this result is similar to the proof of Theorem \ref{t1}.
For $n\geq 1$ we introduce the stopping times 
$$\tau_n= \inf \{u\geq  s: V_u\leq \frac{1}{n}\,\mbox{or}\, \exp(-Y_u)\geq n\}$$
with $\inf\{\emptyset\}=+\infty$. Then from the Ito formula similarly to \eqref{v1} we get: for $0<s<t$
\begin{equation}\label{v12}
V^{\alpha}_{t\wedge \tau_n} = V^{\alpha}_{s} + \alpha \int_s^{t\wedge \tau_n} V_{u-}^{\alpha -1} dV_u + \frac{1}{2}\alpha (\alpha -1)\int_s^{t\wedge \tau_n} V_{u-}^{\alpha -2}d<V^c>_u$$
$$ + \int_s^{t\wedge \tau_n}\int_{\mathbb{R}\setminus\{0\}}\left( (V_{u-}+x)^{\alpha} - V^{\alpha}_{u-} - \alpha V^{\alpha -1}_{u-} x\right) \mu_V(du,dx)
\end{equation}
where $\mu_V$ is the measure of the jumps of $V$.
Using  \eqref{v2}, \eqref{v3}, \eqref{v4} we have
\begin{equation}\label{92}
V^{\alpha}_{t\wedge \tau_n} =V^{\alpha}_{s}+ \alpha \int_s^{t\wedge \tau_n} V_{u-}^{\alpha -1} du 
\end{equation} 
$$+ \alpha \int_s^{t\wedge \tau_n}J_{u}\, V_{u-}^{\alpha -1} d(e^{-Y_u}) + \frac{1}{2}\alpha (\alpha -1)\int_s^{t\wedge \tau_n} V_{u-}^{\alpha }d<Y^c>_u$$
$$\hspace{3cm} + \int_s^{t\wedge \tau_n}\int_{\mathbb{R}\setminus\{0\}}V_{u-}^{\alpha}\left( e^{-\alpha x} -1-\alpha(e^{-x}-1)\right)\mu_Y(du,dx)$$
where $\mu_Y$  the measure of the jumps of $Y$. Taking in account \eqref{v5} we, finally, find that
\begin{equation}\label{rel12}
{\bf E}(V^{\alpha}_{t\wedge \tau_n}) = {\bf E}(V^{\alpha}_{s})+\alpha {\bf E}\left(\int_s^{t\wedge \tau_n} V_{u-}^{\alpha -1} du\right) - {\bf E} \left(\int_s^{t\wedge \tau_n}V_{u-}^{\alpha}\,\,d\bar{\Phi}(u, \alpha) \right)
\end{equation}
\par We remark that $\tau_n\rightarrow +\infty\, ({\bf P}-a.s.)$ as $n\rightarrow +\infty$. To pass to the limit as $n\rightarrow \infty$ in the r.h.s. of the above equality, we use   the Lebesgue monotone convergence theorem for the first term and the Lebesgue dominated convergence theorem for the second term. In fact,   for second term we have:
$$ \left|\int_s^{t\wedge \tau_n}V_{u-}^{\alpha}\,\,d\bar{\Phi}(u, \alpha)\right|\leq \int_s^{t}V^{\alpha}_{u-}\,|\bar{H}_u^{(\alpha )}|du$$
In addition,
$${ \bf E}\left(\int_s^{t}V^{\alpha}_{u-}\,|\bar{H}_u^{(\alpha )}|du\right)\leq 
\sup_{s\leq u \leq t}{\bf E}(V^{\alpha}_u)\,\int_s^{t}|\bar{H}_u^{(\alpha )}|du$$
The function $(\bar{H}_u^{(\alpha )})_ {0\leq u\leq t}$ is deterministic function, integrable on finite intervals. Hence, it remains to show that
\begin{equation}\label{sup}
\sup_{s\leq u \leq t}{\bf E}(V^{\alpha}_u) < \infty .
\end{equation} 
By the Jensen inequality
$$V^{\alpha}_u\leq u^{\alpha -1}\int_0^u e^{\alpha (Y_v - Y_u)}dv$$
Then, in the same way as in Theorem \ref{t1} we get that
$$\sup_{s\leq u \leq t}{\bf E}(V^{\alpha}_u)\leq s^{\alpha }\exp\{ \int_{0}^{t}| H^{(\alpha)}_r|dr \} $$
\par To pass to the limit in the l.h.s. of \eqref{rel12}, we  prove that  the family of $(V_{u\wedge\tau_n}^{\alpha})_{n\geq 1}$ is uniformly integrable, uniformly in  $u\in[s,t]$ in the same way as in Theorem \ref{t1}. For that we introduce a submartingale $(\bar{V}^{(\alpha)}_u)_{s\leq u\leq t}$ with $$\bar{V}^{(\alpha)}_u = e^{-\alpha\bar{M}_u}\int _0^u e^{\alpha\bar{M}_v} dv$$
We remark that 
$$V_u^{\alpha}\leq e^{|\alpha|\,Var(\bar{B})_t} \,\, \bar{V}^{(\alpha)}_u.$$
In addition, on the set $\{\bar{V}^{(\alpha)}_{u\wedge\tau _n}>c\}$ we have: $1< (\frac{\bar{V}^{(\alpha)}_{u\wedge\tau _n}}{c})^{\frac{1}{|\alpha |}}$. Then
$$ \bar{V}^{(\alpha)}_{u\wedge\tau _n} {\bf I}_{\{\bar{V}^{(\alpha)}_{u\wedge\tau _n}>c\}}\leq 
 (\bar{V}^{(\alpha)}_{u\wedge\tau _n})^{1+\frac{1}{|\alpha |}}\,c^{-\frac{1}{|\alpha |}}=
 (\bar{V}^{(\alpha)}_{u\wedge\tau _n})^{\frac{\alpha -1}{\alpha}}\,c^{-\frac{1}{|\alpha |}} $$
Applying Jensen inequality we get that
$$(\bar{V}^{(\alpha)}_{u\wedge\tau _n})^{\frac{\alpha -1}{\alpha}}\leq \bar{V}^{(\alpha-1)}_{u\wedge\tau _n}\,u^{\frac{1}{|\alpha |}}$$
Hence, we proved  that for all $n\geq 1$, $c>0$ and $s\leq u\leq t$
$${\bf E}( \bar{V}^{(\alpha)}_{u\wedge\tau _n} {\bf I}_{\{\bar{V}^{(\alpha)}_{u\wedge\tau _n}>c\}}) \leq 
 c^{-\frac{1}{|\alpha|}}\, \max (s^{\frac{1}{|\alpha|}}, t^{\frac{1}{|\alpha|}})\,{\bf E}(\bar{V}_u^{(\alpha -1)})$$
Finally, we prove that
$$\sup_{s\leq u \leq t}{\bf E}(\bar{V}^{(\alpha-1)}_u)<\infty $$
in the same manner as before, and it implies the uniform integrability.
 \par After limit passage we get that
\begin{equation}\label{rt2}
{\bf E}(V_t^{\alpha}) = {\bf E}(V_s^{\alpha}) - \int_s^{t}{\bf E}(V_u^{\alpha}) \,d\bar{\Phi}(u,\alpha ) + \alpha\int_s^t
{\bf E}(V_u^{\alpha -1})du
\end{equation}
We differentiate  each term of this equality w.r.t. $s$ to obtain that
$$\frac{d}{ds} {\bf E}(V_s^{\alpha}) + {\bf E}(V_s^{\alpha})\,\bar{H}^{(\alpha) }_s - \alpha
{\bf E}(V_s^{\alpha -1})=0 $$
We take in account that $ {\bf E}(V_s^{\alpha})= m^{(\alpha)}_{t-s,t}$,
$ {\bf E}(V_s^{\alpha-1})= m^{(\alpha-1)}_{t-s,t}$ and that $\bar{H}^{(\alpha) }_s=H^{(\alpha) }_{t-s}$. This implies that
$$\frac{d}{ds}m^{(\alpha)}_{t-s,t}+m^{(\alpha)}_{t-s,t}H^{(\alpha )}_{t-s}-\alpha\,m^{(\alpha-1)}_{t-s,t}=0$$
Finally, replacing $t-s$ by $s$ we get \eqref{8aaa}. In the case of Levy processes
$m^{(\alpha)}_{t-s,t}=m^{(\alpha)}_{s}$ due to homogeneity, and $H^{(\alpha )}_{t-s}=\Phi(\alpha)$, and this gives \eqref{levy2}.
\end{proof}
\par To present the results about negative moments of Levy process $X$  with the parameters $(b_0,c_0, K_0)$, we introduce  the condition:
\begin{equation}\label{l22} 
\int_{x>1} e^{(|\alpha | +1) x} K_0(dx) < \infty .
\end{equation}
\begin{corr} \label{c3}(cf. \cite{BY})Let $X$ be Levy process verifying \eqref{cond1} and \eqref{l22}, and let $\alpha\leq~-1$ be fixed. Suppose that $m^{(\alpha -1 )}_{\infty}<\infty$. Then the Laplace-Carson transforms  of $m^{(\alpha)}_{\infty}$ and $m^{(\alpha -1)}_{\infty}$ are well-defined and  we have a recurrent formula:
\begin{equation}\label{9aa}
\hat{m}^{(\alpha -1)}_q=\frac{1}{\alpha}\left(\hat{m}^{(\alpha )}_q \Phi(\alpha)+q\,\hat{m}^{(\alpha )}_q \right)
\end{equation}
and, hence,
$$\hat{m}^{(\alpha -1)}_q= \frac{1}{\alpha}\hat{m}^{(\alpha )}_q (q+\Phi(\alpha))$$
In particular, under above conditions,  for integer $n\geq 2$ and $\alpha = -n$ we get:
\begin{equation}\label{10aa}
\hat{m}^{(-n)}_q=  \hat{m}^{(-1)}_q\,\cdot\frac{(-1)^{n-1}}{(n-1)!}\,\prod_{k=1}^{n-1}(q+\Phi(-k) )
\end{equation}
As a consequence,
\begin{equation}\label{neg}
 {\bf E}(I_{\infty}^{-n}) = {\bf E}(I_{\infty}^{-1})\,\cdot\frac{(-1)^{n-1}}{(n-1)!}\,\,\prod_{k=1}^{n-1} \Phi(-k)
\end{equation}
\end{corr}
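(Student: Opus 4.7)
The plan is to follow the template of the proof of Corollary \ref{c1}, with the ODE \eqref{levy2} of Theorem \ref{t2} playing the role of \eqref{8aa}.

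First, I would apply the Laplace--Carson transform $\int_0^{\infty}q e^{-qs}(\cdot)\,ds$ to both sides of \eqref{levy2}. The term $\Phi(\alpha)m_s^{(\alpha)}$ transforms to $\Phi(\alpha)\hat m_q^{(\alpha)}$ directly, and integration by parts converts $\int_0^\infty qe^{-qs}\frac{d}{ds}m_s^{(\alpha)}\,ds$ into $q\hat m_q^{(\alpha)}$ up to boundary contributions. The boundary contribution at $+\infty$ vanishes since $I_s$ is non-decreasing and $\alpha<0$ imply that $m_s^{(\alpha)}$ is non-increasing with limit $m_\infty^{(\alpha)}<\infty$ (this last is finite because Jensen's inequality gives $m_\infty^{(\alpha)}\leq (m_\infty^{(\alpha-1)})^{\alpha/(\alpha-1)}$, whose right-hand side is finite by hypothesis). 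Rearranging yields the recurrence \eqref{9aa}.

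Iterating \eqref{9aa} for $\alpha=-1,-2,\ldots,-(n-1)$ produces a telescoping product: the factors $1/\alpha$ aggregate to $(-1)^{n-1}/(n-1)!$ and the factors $(q+\Phi(\alpha))$ to $\prod_{k=1}^{n-1}(q+\Phi(-k))$, giving exactly \eqref{10aa}. For \eqref{neg}, I would then let $q\to 0+$. The polynomial factor tends to $\prod_{k=1}^{n-1}\Phi(-k)$, and by the same Cesàro-type argument already used in the proof of Corollary \ref{c1} (based on the identity $q^2\int_0^\infty s e^{-qs}\,ds=1$ together with $\int_0^t m_u^{(-n)}\,du/t\to m_\infty^{(-n)}$), one obtains $\hat m_q^{(-n)}\to m_\infty^{(-n)}$ and $\hat m_q^{(-1)}\to m_\infty^{(-1)}$.

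The main obstacle is justifying the integration-by-parts step near $s=0$: for $\alpha<0$, $I_s\to 0$ as $s\to 0+$ forces $m_s^{(\alpha)}\to+\infty$, so the naive boundary term $[qe^{-qs}m_s^{(\alpha)}]_{s=0}$ is divergent. I would handle this by carrying out the integration first on $[\varepsilon,\infty)$, keeping the boundary term explicit in
\begin{equation*}
\int_\varepsilon^{\infty}q e^{-qs}m_s^{(\alpha-1)}\,ds = \tfrac{1}{\alpha}\Bigl[(q+\Phi(\alpha))\!\int_\varepsilon^\infty q e^{-qs}m_s^{(\alpha)}\,ds - q e^{-q\varepsilon}m_\varepsilon^{(\alpha)}\Bigr],
\end{equation*}
and then showing that as $\varepsilon\to 0+$ the divergent boundary term on the right cancels exactly against the analogous divergence of the left-hand side (which blows up from the non-integrability of $s^\alpha$ near $0$), so that the identity survives for the Laplace--Carson transforms interpreted as generalised integrals. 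The integrability conditions \eqref{cond1} and \eqref{l22} then validate the dominated convergence estimates needed at each step of the iteration and the final passage $q\to 0$.
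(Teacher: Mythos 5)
Your route is exactly the paper's route --- its entire proof reads ``Two first equalities \dots follow directly from Theorem \ref{t2}. The third one can be proved in the same way as in Corollary \ref{c1} by letting $q\rightarrow 0$'' --- and the obstacle you isolate at $s=0$ is real and is passed over in silence there. The problem is that your proposed repair does not close it. Since $I_s\sim s$ as $s\to 0+$, one has $m_s^{(\alpha)}\sim s^{\alpha}$ and $m_s^{(\alpha-1)}\sim s^{\alpha-1}$ near the origin, and for $\alpha\le -1$ neither is integrable there; hence $\hat{m}^{(\alpha)}_q$ and $\hat{m}^{(\alpha-1)}_q$ equal $+\infty$ for every $q>0$. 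Your $\varepsilon$-truncated identity is correct, and the leading divergences on its two sides do match (both are $\frac{q}{|\alpha|}\varepsilon^{\alpha}(1+o(1))$), but matching divergences does not produce finite transforms: after letting $\varepsilon\to 0+$, \eqref{9aa} and \eqref{10aa} hold only as ``$\infty=\infty$'', the iteration transports no information, and the final step $\hat{m}^{(-n)}_q\to m^{(-n)}_{\infty}$ is false, since the left-hand side is identically $+\infty$ while the right-hand side is finite by hypothesis. So the chain \eqref{9aa} $\Rightarrow$ \eqref{10aa} $\Rightarrow$ \eqref{neg} cannot be run as written --- neither in your version nor, for that matter, in the paper's.

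What survives, and what Corollary \ref{c4} actually uses, is \eqref{neg}, and it follows from \eqref{levy2} with no transform at all. Since $I_s$ is nondecreasing and $\alpha<0$, the map $s\mapsto m^{(\alpha)}_s$ is nonincreasing with finite limit $m^{(\alpha)}_{\infty}$ (your Jensen bound $m^{(\alpha)}_{\infty}\le (m^{(\alpha-1)}_{\infty})^{\alpha/(\alpha-1)}$ is fine for this), so $\frac{d}{ds}m^{(\alpha)}_s\to 0$ along some sequence $s_j\to\infty$; passing to the limit in \eqref{levy2} along that sequence gives $m^{(\alpha-1)}_{\infty}=\frac{\Phi(\alpha)}{\alpha}\,m^{(\alpha)}_{\infty}$, and iterating from $\alpha=-1$ down to $\alpha=-(n-1)$ yields \eqref{neg} together with the sign information $\Phi(-k)<0$. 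If you insist on transform identities in the spirit of \eqref{9aa}--\eqref{10aa}, you must first subtract the singularity, e.g.\ work with $\int_0^{\infty}qe^{-qs}\bigl(m^{(\alpha)}_s-m^{(\alpha)}_{\infty}\bigr)\,ds$ or keep $\varepsilon>0$ fixed throughout; as stated, the quantities appearing in \eqref{9aa} and \eqref{10aa} are not finite, and your cancellation argument, while a correct diagnosis, is not a cure.
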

\begin{proof}Two first equalities given above follow directly from  Theorem \ref{t2}. The third one can be proved in the same way as in Corollary \ref{c1} by letting $q\rightarrow 0$.
\end{proof}
\begin{ex}\rm
For compound Poisson process presented in Example 6 with $U_k$'s which follows standard normal distribution, we get for $n\geq 1$
$$  {\bf E}(I_{\infty}^{-n}) = {\bf E}(\,I_{\infty}^{-1})\,\frac{\lambda^{n-1} }{(n-1)!}\,\,\prod_{k=1}^{n-1}(e^{\frac{k^2}{2}}-1).$$
\end{ex}
\begin{corr} \label{c4} Let $\beta = \sup\{ k\geq 1\,|\, -\infty < \Phi(-l)<0 \,\,\mbox{for} \,\,1\leq l\leq k\}$
with $\sup\{\emptyset\}=1$. Then ${\bf E}(I^{-(n+1)}_{\infty})<~\infty$ if and only if $n\leq \beta$ and  ${\bf E}(I^{-1}_{\infty})<~\infty$.\\
In particular, for Brownian motion with the drift coefficient $b_0$ and the diffusion coefficient $c_0\neq 0$, ${\bf E}(I^{-(n+1)}_{\infty})<~\infty$ if and only if $\frac{2b_0}{c_0}>-1$.\\ 
If $X$ is a subordinator with $\int_{\mathbb{R}^+\setminus\{0\}}xK_0(dx) < \infty$, then 
$$\Phi (-k)=- k[b_0 - \int_{\mathbb{R}^+\setminus\{0\}}xK_0(dx)] - \int_{\mathbb{R}^+\setminus\{0\}}(e^{kx}-1)K_0(dx),$$ and
under the condition 
\begin{equation}\label{condition}
b_0 - \int_{\mathbb{R}^+\setminus\{0\}}xK_0(dx)\geq 0
\end{equation}
${\bf E}(I^{-(n+1)}_{\infty})<~\infty$  if and only if  $\int_{\mathbb{R}^+\setminus\{0\}}(e^{nx}-1)K_0(dx)<\infty$.
\end{corr}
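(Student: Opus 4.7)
My plan is to prove the main equivalence by induction on $n$ using as engine the product formula \eqref{neg} from Corollary \ref{c3}. For the sufficient direction, assume $n\leq\beta$ and ${\bf E}(I_\infty^{-1})<\infty$; by definition of $\beta$, every $\Phi(-l)$ with $1\leq l\leq n$ is finite and strictly negative. Iterating the Laplace-Carson recursion \eqref{9aa} along $\alpha=-1,-2,\dots,-n$ gives
\[
\hat m^{(-(n+1))}_q \;=\; \frac{(-1)^n}{n!}\,\hat m^{(-1)}_q\prod_{l=1}^n\bigl(q+\Phi(-l)\bigr),
\]
and letting $q\to 0+$ by the Abelian argument used in the proof of Corollary \ref{c1}, the left side converges to $m^{(-(n+1))}_\infty$ and the right side to $\frac{(-1)^n}{n!}\,{\bf E}(I_\infty^{-1})\prod_{l=1}^n\Phi(-l)$, which is finite and strictly positive since the $n$ negative factors combine with the prefactor $(-1)^n$. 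Hence ${\bf E}(I_\infty^{-(n+1)})<\infty$.

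For the necessary direction, assume ${\bf E}(I_\infty^{-(n+1)})<\infty$. Jensen's inequality applied to the concave function $x\mapsto x^{k/(n+1)}$ gives ${\bf E}(I_\infty^{-k})\leq {\bf E}(I_\infty^{-(n+1)})^{k/(n+1)}<\infty$ for every $1\leq k\leq n+1$, and in particular ${\bf E}(I_\infty^{-1})<\infty$. Dividing \eqref{neg} at consecutive levels $k+1$ and $k$ yields
\[
\frac{{\bf E}(I_\infty^{-(k+1)})}{{\bf E}(I_\infty^{-k})} \;=\; -\frac{\Phi(-k)}{k},\qquad 1\leq k\leq n,
\]
whose left-hand side is a finite positive number; hence each $\Phi(-k)$ with $1\leq k\leq n$ must be finite and strictly negative, i.e.\ $n\leq\beta$.

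The two particular cases follow by direct substitution into the Lévy-Khintchine formula. For Brownian motion with drift, $\Phi(-k)=-k(b_0+kc_0/2)$ is polynomial, hence always finite, and strictly negative for every $k\geq 1$ iff the constraint at $k=1$ holds, namely $2b_0/c_0>-1$; in that regime $\beta=+\infty$ and every negative moment of $I_\infty$ is finite. For a subordinator under \eqref{condition}, the drift contribution $-k(b_0-\int xK_0(dx))$ is non-positive by assumption, so $\Phi(-k)$ is finite iff $\int(e^{kx}-1)K_0(dx)<\infty$ and is automatically strictly negative in that case; the binding order $k=n$ delivers the criterion stated.

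The principal technical delicacy is that Corollary \ref{c3}, as stated, requires condition \eqref{l22} at each applied level $\alpha=-k$, which amounts to $\Phi(-(k+1))$ finite; at the terminal step $k=n$ this calls for $\Phi(-(n+1))$ finite, which is not supplied by $n\leq\beta$. I would handle this by truncating the positive jumps of $X$ above a level $R$ to obtain a process $X^{(R)}$ with all $\Phi^{(R)}(-l)$ finite, applying the product formula to $X^{(R)}$, and then passing to the limit $R\to\infty$ by monotone convergence in each of the finitely many factors $\Phi^{(R)}(-l)\to\Phi(-l)$ for $1\leq l\leq n$.
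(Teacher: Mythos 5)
Your ``only if'' direction and the two special cases follow the paper's proof essentially verbatim: the paper obtains finiteness of the intermediate moments ${\bf E}(I_\infty^{-k})$, $1\le k\le n$, by Cauchy--Schwarz where you use Jensen, and then, exactly as you do, reads off from \eqref{neg} that each $\Phi(-k)$ must be finite and strictly negative (your device of taking the ratio of \eqref{neg} at consecutive orders is a clean phrasing of the same step). The treatment of Brownian motion and of the subordinator by direct substitution into the Laplace exponent is also the paper's.

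The ``if'' direction is where you genuinely depart from the paper, and it is also where your argument, as written, is circular. The recursion \eqref{9aa} and its iterate \eqref{10aa} are established in Corollary \ref{c3} only under the standing hypothesis $m^{(\alpha-1)}_\infty<\infty$; applying them with $\alpha=-n$ to produce $\hat{m}^{(-(n+1))}_q$ therefore presupposes ${\bf E}(I_\infty^{-(n+1)})<\infty$, which is precisely what you are trying to prove, and the Abelian passage $q\to 0$ borrowed from Corollary \ref{c1} likewise concerns a limit already known to be finite. The paper avoids this by arguing directly from the differential recursion \eqref{levy2}: using the sign of $\frac{d}{ds}m^{(-k)}_s$ it extracts the bound $m^{(-(k+1))}_s\le\frac{|\Phi(-k)|}{k}\,m^{(-k)}_s$ and closes an induction started at ${\bf E}(I_\infty^{-1})<\infty$, never assuming finiteness at the next level. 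That said, you correctly identified a real defect that the paper itself passes over in silence: condition \eqref{l22} (equivalently \eqref{rt11}) at the top level $k=n$ demands $\int_{x>1}e^{(n+1)x}K_0(dx)<\infty$, which $n\le\beta$ does not supply. Your truncation of the positive jumps at level $R$ is the right repair, and it in fact cures the circularity as well, though you do not say so: for $X^{(R)}$ condition \eqref{4} holds for every $\alpha$, so $m^{(-m),(R)}_{t_0}<\infty$ for finite $t_0$ by Proposition \ref{p2}, and since $s\mapsto m^{(-m),(R)}_s$ is non-increasing the hypothesis $m^{(-m),(R)}_\infty<\infty$ of Corollary \ref{c3} is automatic; then $(I^{(R)}_\infty)^{-m}\uparrow I_\infty^{-m}$ and $\Phi^{(R)}(-l)\downarrow\Phi(-l)$ let you pass to the limit by monotone convergence. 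Make that step explicit and your route becomes a complete alternative to the paper's; without it, the sufficiency half does not stand on its own.
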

\begin{proof} Suppose that ${\bf E}(I^{-(n+1)}_{\infty})<~\infty$ for some $n>0$. Then by Cauchy-Schwartz inequality we get that for all $k$, $1\leq k\leq n$, ${\bf E}(I^{-k}_{\infty})<~\infty$. Then the formula \eqref{neg} yields that  ${\bf E}(I^{-1}_{\infty})<~\infty$ and
$-\infty<\Phi(-k)<0$ for $1\leq k\leq n$. Hence, $n\leq \beta$ and  ${\bf E}(I^{-1}_{\infty})<~\infty$.\\
Conversely, if $n\leq \beta$ and  ${\bf E}(I^{-1}_{\infty})<~\infty$, then $-\infty<\Phi(-k)<0$ for $1\leq k\leq n$. Then from \eqref{levy2} we deduce that
$$m_s^{-(k+1)}= \frac{\Phi(-k)}{-k}m^{(-k)}_s -\frac{1}{k} \frac{d}{ds}m^{(-k)}_s\leq  \frac{\Phi(-k)}{-k}m^{(-k)}_s \leq  \frac{|\Phi(-k)|}{k}m^{(k)}_{\infty}$$
since $\frac{d}{ds}m^{(-k)}_s\geq 0$. Hence,
$${\bf E}(I^{-(n+1)}_{\infty})= m_{\infty}^{-(n+1)}\leq \prod_{k=1}^n \frac{|\Phi(-k)|}{k}\,{\bf E}(I^{-1}_{\infty})<\infty .$$
In the case of Brownian motion we conclude that $\Phi(-k)= -kb_0-\frac{1}{2}k^2 c_0 >-\infty$ for all $k\geq 1$. Since $\Phi$ is concave function with $\Phi(0)=0$, the condition $\Phi(-1)<0$ ensures the existence of all negative moments.\\
In the case when $X$ is a subordinator, and under mentioned condition \eqref{condition},  all $\Phi(-k)<0$ and only the condition of finiteness of $\Phi(-k)$ is involved in the existence of the negative moments.
\end{proof}
\begin{ex}\rm Let us apply the Corollary \ref{c4} to 
 time changed Brownian motion considered in Example 8. We get that $\Phi(\alpha)<~0$
whenever $-b^2\leq 2\alpha\mu - \alpha ^2\sigma^2<0$. Hence, all negative moments of $I_{\infty}$ exists if ${\bf E}(I_{\infty}^{-1})<\infty$ and $2\mu +\sigma^2>0$. 
\end{ex}
{\bf Acknowledgement}\\ We are grateful to our referees for very useful remarks and comments.

\end{section}

\end{document}